\documentclass[11pt]{amsart}
\usepackage{amsfonts,amssymb,amsthm}
\usepackage{amsmath,amscd}
\usepackage{pstricks}
\usepackage{pstricks,pst-node}
\usepackage{mathrsfs}
\usepackage[all]{xy}

\def\bffke{\mathtt{e}}
\def\bffkf{\mathtt{f}}

\def\bffkk{{\mathtt 1}}
\newcommand{\ttk}{\mathtt{1}}


\newcommand{\bssg}{\boldsymbol{\sg}}

\newcommand{\bse}{\boldsymbol{e}}

\newcommand{\tf}{\ti{f}}
\DeclareMathAlphabet{\mathpzc}{OT1}{pzc}{m}{it}

\oddsidemargin 0truein \evensidemargin 0pt \topmargin 0pt
\textheight 8.5truein
\textwidth 6.3truein

\theoremstyle{plain}
\newtheorem{Thm}{Theorem}[section]
\newtheorem{Prop}[Thm]{Proposition}
\newtheorem{Lem}[Thm]{Lemma}
\newtheorem{Coro}[Thm]{Corollary}
\theoremstyle{definition}

\newtheorem{Def}[Thm]{Definition}

\numberwithin{equation}{section}




\newcommand{\bfj}{{\mathbf{j}}}

\newcommand{\bfl}{{\mathbf{0}}}

\newcommand{\bfU}{{\mathbf{U}}}

\newcommand{\bfS}{{\mathbf{S}}}

\newcommand{\bfTr}{{\mathbf T}_\vtg(n,r)}



\def\sfz{{\mathsf z}}



\def\sA{{\mathcal A}}

\def\sL{{\mathcal L}}

\def\sN{{\mathcal N}}

\def\sU{{\mathcal U}}

\def\sX{{\mathcal X}}

\def\sZ{{\mathcal Z}}


\newcommand{\mbn}{\mathbb N}
\newcommand{\mbq}{\mathbb Q}

\newcommand{\mbz}{\mathbb Z}

\newcommand{\ttx}{\mathtt{x}}
\newcommand{\tty}{\mathtt{y}}

\newcommand{\tte}{\mathtt{e}}

\newcommand{\ttf}{\mathtt{f}}

\newcommand{\ttm}{\mathtt{m}}


\newcommand{\spann}{\operatorname{span}}
\newcommand{\diag}{\operatorname{diag}}

\def\ro{\text{\rm ro}}
\def\co{\text{\rm co}}


\newcommand{\la}{{\lambda}}
\newcommand{\La}{\Lambda}

\newcommand{\Ga}{{\Gamma}}

\newcommand{\dt}{\delta}

\newcommand{\up}{v}

\newcommand{\vep}{\varepsilon}

\newcommand{\bt}{\beta}
\newcommand{\sg}{\sigma}



\def\bpa#1#2{\left({#1\atop #2}\right)}

\def\ggp#1#2{\left[\kern-3.2pt\left[{#1\atop #2}\right]\kern-3.2pt\right]}


\def\leq{\leqslant}\def\geq{\geqslant}
\def\le{\leqslant}

\newcommand{\ot}{\otimes}

\newcommand{\han}{\subseteq}

\newcommand{\h}{\widehat}
\newcommand{\ti}{\widetilde}



\newcommand{\tA}{{}^t\!A}

\newcommand{\lra}{\longrightarrow}
\newcommand{\ra}{\rightarrow}




\newcommand{\vtg}{{\!\vartriangle}}




\newcommand{\afSrmbq}{{\mathcal S}_{\vtg}(n,r)_{\mathbb Q}}


\newcommand{\afgl}{\widehat{\frak{gl}}_n}

\newcommand{\afal}{{\alpha}^\vartriangle}

\newcommand{\afbse}{\boldsymbol e^\vartriangle}

\newcommand{\afE}{E^\vartriangle}
\newcommand{\afSr}{{\mathcal S}_{\vtg}(n,r)_\sZ}
\newcommand{\afSrl}{{\mathcal S}_{\vtg}(n,r)}

\newcommand{\afbfSr}{{\boldsymbol{\mathcal S}}_\vtg(n,r)}
\newcommand{\afbfSrr}{{\boldsymbol{\mathcal S}}_\vtg(r,r)}

\newcommand{\afmbnn}{\mathbb N_\vtg^{n}}
\newcommand{\afmbzn}{\mathbb Z_\vtg^{n}}

\newcommand{\afThn}{\Theta_\vtg(n)}

\newcommand{\afThnpm}{\Theta_\vtg^\pm(n)}
\newcommand{\afThnp}{\Theta_\vtg^+(n)}
\newcommand{\afThnm}{\Theta_\vtg^-(n)}

\newcommand{\afThnr}{\Theta_\vtg(n,r)}

\newcommand{\afMnc}{M_{\vtg,n}(\mathbb Q)}

\newcommand{\afLa}{\Lambda_\vtg}
\newcommand{\afLanr}{\Lambda_\vtg(n,r)}




\begin{document}
\title{Presenting affine Schur Algebras}

\author{Qiang Fu}
\address{Department of Mathematics, Tongji University, Shanghai, 200092, China.}
\email{q.fu@hotmail.com, q.fu@tongji.edu.cn}

\author{Mingqiang Liu}
\address{Department of Mathematics, Tongji University, Shanghai, 200092, China.}
\email{mingqiangliu@163.com}


\thanks{Supported by the National Natural Science Foundation
of China}

\begin{abstract}
The universal enveloping algebra $\sU(\afgl)$ of $\afgl$ was realized in \cite[Ch. 6]{DDF} using affine Schur algebras. In particular some explicit multiplication formulas in affine Schur algebras were derived. We use these formulas to study the structure of affine Schur algebras. In particular, we give a presentation of the affine Schur algebra $\afSrmbq$ over $\mbq$.
\end{abstract}


 \sloppy \maketitle
\section{Introduction}
 Beilinson, Lusztig, and
MacPherson (BLM) gave a geometric realization for  the quantum enveloping algebra $\bfU(\frak{gl}_n)$ of $\frak{gl}_n$ over $\mbq(\up)$ via
$q$-Schur algebras in \cite{BLM}.
The remarkable BLM's work has many applications. Using BLM's work, it was proved  in \cite{Du95} that the natural algebra homomorphism from the Lusztig integral form of quantum $\frak{gl}_n$ to the $q$-Schur algebra over $\sZ$ is surjective, where $\sZ=\mbz[\up,\up^{-1}]$. The integral form of quantum $\frak{gl}_n$ was realized in \cite{Fu15a} and the Frobenius--Lusztig kernel of type $A$ was realized in \cite{Fu15b}. Furthermore, BLM's work can be used to investigate the presentation of $q$-Schur algebras (cf. \cite{DG,DP03}).

The affine quantum Schur algebra is the affine version of the $q$-Schur algebra and it has several equivalent definitions (see \cite{GV,Gr99,Lu99}).
Let $\bfU(\afgl)$ be the quantum enveloping algebra of the loop algebra of $\mathfrak {gl}_n$. A conjecture about the realization of $\bfU(\afgl)$ using affine quantum Schur algebras was formulated in \cite[5.5(2)]{DF10}. This conjecture has been proved in the classical ($v=1$) case in \cite[Ch. 6]{DDF}, and in the quantum case in \cite{DF13}. These results have important application to the investigation of the integral affine quantum Schur--Weyl reciprocity (cf. \cite{Fu13,Fu12,DF14}).

The presentation of affine quantum Schur algebras is useful in the
investigation of categorifications of the affine quantum Schur algebras
(cf. \cite[0.1]{MT1} and \cite{MT2}).
The presentation of the affine quantum Schur algebra $\afbfSr$ over $\mbq(\up)$ is given
in \cite{DGr,Mc07} under the assumption that
$n>r$. The presentation of the affine quantum Schur algebra $\afbfSrr$ is given  in \cite[Thm. 5.3.5]{DDF}. However the presentation problem of $\afbfSr$ is much more complicated in the $n<r$ case (cf. \cite[Rem. 5.3.6]{DDF}).
In this paper, we give a presentation of the classical affine Schur algebra $\afSrmbq$ over $\mbq$ for any $n,r$.

We organize this paper as follows. We recall the definition of the algebra $\sU(\afgl)$ and the affine Schur algebra $\afSrmbq$ in \S2.
In \S3, we recall the multiplication formulas in affine Schur algebras established in \cite[Ch. 6]{DDF}. These formulas are essential in studying the presentation of affine Schur algebras. Furthermore we will construct the PBW-basis for affine Schur algebras in \ref{basis for ASA}. We will construct the algebra $\bfTr$ by generators and relations in \S4 and prove that $\bfTr$ is isomorphic to the affine Schur algebra $\afSrmbq$ in \ref{main}.

\section{The algebra $\sU(\afgl)$ and the affine Schur algebra $\afSrmbq$}

Let $\afMnc$ be the set of all $\mbz\times\mbz$ matrices $A=(a_{i,j})_{i,j\in\mbz}$ with $a_{i,j}\in\mbq$ such that
\begin{itemize}
\item[(a)] For $i,j\in\mbz$, $a_{i,j}=a_{i+n,j+n}$; \item[(b)] For every $i\in\mbz$, the set $\{j\in\mbz\mid a_{i,j}\not=0\}$ is finite.
\end{itemize}
For $i,j\in\mbz$,  denote by $\afE_{i,j}=(e^{i,j}_{k,l})_{k,l\in\mbz}$ satisfying
\begin{equation*}
e_{k,l}^{i,j}=
\begin{cases}1~~~\text{if}~k=i+sn,l=j+sn~\text{for~some} s\in\mbz;\\
0~~~\text{otherwise}.\end{cases}
\end{equation*}
Let $\afgl:=\mathfrak{gl}_n(\mathbb{Q})\otimes\mathbb{Q}[t,t^{-1}].$
Clearly, the map
$$\afMnc\lra\afgl,\,\,\,\afE_{i,j+ln}\longmapsto E_{i,j}\ot t^l, \,1\le i,j\le n,l\in\mbz,$$
is a Lie algebra isomorphism.
We will identify the loop algebra $\afgl$ with $\afMnc$ in the sequel.

Let $\afThn:=\{A\in\afMnc\mid a_{i,j}\in \mathbb{N}\}$ and
$\afThnpm:=\{A\in\afThn\mid \text{for~any}\ i, a_{i,i}
=0\}.$
Let $\afThnp:=\{A\in\afThn\mid a_{i,j}=0~\text{for}~i\geq j\}$ and $\afThnm:=\{A\in\afThn\mid a_{i,j}=0 ~\text{for}~i\leq j\}.$
For $A\in\afThnpm$ write $A=A^++A^-$ with $A^+\in\afThnp$ and $A^-\in\afThnm$.

Let $\sU(\afgl)$ be the universal enveloping algebra of the loop algebra $\afgl$. For $t\in\mbn$ let $\big({H_i\atop
t}\big):=\frac{H_i(H_i-1)\cdots(H_i-t+1)}{t!}\in\sU(\afgl)$, where $H_i:=\afE_{i,i}$. For $\lambda\in\afmbnn$, denote $$\bigg({H\atop\lambda}\bigg)=\prod\limits_{1\leq i\leq n}\bigg({H_i\atop
\lambda_i}\bigg).$$
Let
\begin{equation}\label{L+-}
\sL^+=\{(i,j)\mid 1\leq i\leq n,\ j\in\mbz,\,i<j\}\text{ and } \sL^-=\{(i,j)\mid 1\leq i\leq n,\ j\in\mbz,\,i>j\}
\end{equation}
Then the set
\begin{equation}\label{basis2}
\big\{\prod_{(i,j)\in\sL^+}({{\afE_{i,j}}})^{a_{i,j}}\bigg({H\atop\lambda}\bigg)
\prod_{(i,j)\in\sL^-}({{\afE_{i,j}}})^{a_{i,j}}\,\big |\, \la\in\afmbnn,\,A\in\afThnpm\big\}
\end{equation}
forms a basis of $\sU(\afgl)$,
where the products are taken with respect to any fixed total order on $\sL^+$ and $\sL^-$.

Let $I=\mbz/n\mbz$ and we identify $I$ with $\{1,2,\cdots,n\}$.
Let $C=(c_{i,j})_{i,j\in I}$ be the Cartan matrix of affine type $A_{n-1}$. For $s\in\mbz$ with $s\not=0$  let
$Z_s=\sum_{1\leq h\leq n}\afE_{h,h+sn}.$
According to \cite[Thm. 6.1.1]{DDF} we have the following result.
\begin{Lem}
The algebra $\sU(\afgl)$ is the $\mbq$-algebra generated by $E_i=\afE_{i,i+1}$, $F_i=\afE_{i+1,i}$, $H_i$, $Z_s$, $i\in I$, $s\in\mbz$, $s\not=0$ with relations ($i,j\in I,\,s,t\in\mbz,\,s,t\not=0$)
\begin{itemize}
\item[(UR1)] $H_iH_j=H_jH_i$;
\item[(UR2)]
$H_iE_{j}-E_{j}H_i=(\dt_{i,j}-\dt_{i,j+1})E_{j},\quad H_iF_{j}-F_{j}H_i=(\dt_{i,j+1}-\dt_{i,j})F_{j};$
\item[(UR3)]
$E_{i}F_{j}-F_{j} E_{i} = \delta_{ij}(H_j-H_{j+1})$
\item[(UR4)]
$\displaystyle\sum_{a+b=1-c_{i,j}}(-1)^a\bigg({1-c_{i,j}\atop a}\bigg)
E_i^{a}E_jE_i^{b}=0$ for $i\not=j$;
\item[(UR5)]
$\displaystyle\sum_{a+b=1-c_{i,j}}(-1)^a\bigg({1-c_{i,j}\atop a}\bigg)
F_i^{a}F_jF_i^{b}=0$ for $i\not=j$;
\item[(UR6)]
$E_iZ_s=Z_s E_i$, $F_iZ_s=Z_s F_i$, $H_iZ_s=Z_s H_i$, $Z_sZ_t=Z_t Z_s$.
\end{itemize}
\end{Lem}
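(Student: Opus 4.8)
The plan is to exhibit the obvious surjection from the abstractly presented algebra onto $\sU(\afgl)$ and then force it to be an isomorphism by a Poincar\'e--Birkhoff--Witt comparison against the basis \eqref{basis2}. First I would check that the six families of relations really hold in $\sU(\afgl)$. Under the identification $\afgl\cong\afMnc$ these are routine bracket computations with the matrix units $\afE_{i,j}$: (UR1)--(UR3) record $[H_i,H_j]=0$, $[H_i,\afE_{j,j+1}]=(\delta_{i,j}-\delta_{i,j+1})\afE_{j,j+1}$ and $[\afE_{i,i+1},\afE_{j+1,j}]=\delta_{i,j}(H_i-H_{i+1})$; (UR4)--(UR5) are the affine Serre relations for the Chevalley generators $E_i,F_i$ ($i\in I$); and (UR6) reflects that $Z_s=\sum_h\afE_{h,h+sn}$ corresponds to $\mathrm{Id}\otimes t^s$, which is central in $\afgl$ and hence commutes with $E_i,F_i,H_i$ and with every $Z_t$. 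Since $E_i,F_i,H_i,Z_s$ generate $\afgl$ as a Lie algebra (the $E_i,F_i$ generate the loop algebra of $\mathfrak{sl}_n$ and the $Z_s$ supply the remaining central directions), they generate $\sU(\afgl)$, so these relations produce a surjective algebra homomorphism $\phi$ from the presented algebra $\mathfrak U$ onto $\sU(\afgl)$.

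The heart of the matter is injectivity of $\phi$, and here I would use the Lie-algebra decomposition $\afgl=\mathfrak g'\oplus\mathfrak z$, where $\mathfrak z=\operatorname{span}\{\mathrm{Id}\otimes t^s\mid s\neq 0\}$ is a central ideal and $\mathfrak g'=(\mathfrak{sl}_n\otimes\mbq[t,t^{-1}])\oplus\mbq Z_0$ carries the full $\mathfrak{gl}_n$-Cartan (note $Z_0=\sum_h H_h=\mathrm{Id}\otimes 1$). Relations (UR1)--(UR5) involve only $E_i,F_i,H_i$ and should present $\sU(\mathfrak g')$, while (UR6) asserts that the $Z_s$ are central and mutually commuting; I therefore expect a factorization $\mathfrak U\cong\mathfrak U'\otimes\mbq[Z_s:s\neq 0]$ with $\mathfrak U'$ presented by (UR1)--(UR5), which reduces the whole problem to the presentation of $\sU(\mathfrak g')$.

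For $\mathfrak U'$ I would set up a triangular spanning. Writing $\mathfrak U'^{+},\mathfrak U'^{0},\mathfrak U'^{-}$ for the subalgebras generated respectively by the $E_i$, the $H_i$, and the $F_i$, relations (UR2)--(UR3) let me move the $H_i$ to the middle and the $F_i$ to the right, yielding a surjective multiplication map $\mathfrak U'^{-}\otimes\mathfrak U'^{0}\otimes\mathfrak U'^{+}\to\mathfrak U'$. The essential input is that the affine Serre relations (UR4) suffice to straighten any monomial in the $E_i$ into an ordered product $\prod_{(i,j)\in\sL^+}(\afE_{i,j})^{a_{i,j}}$ of powers of the positive root vectors $\afE_{i,j}$ (obtained as iterated brackets of the $E_i$), and symmetrically for the $F_i$ with \eqref{L+-}. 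Together with the fact that the elements $\binom{H}{\la}$ generate $\mathfrak U'^{0}$, this shows $\mathfrak U$ is spanned by the monomials occurring in \eqref{basis2} times monomials in the $Z_s$.

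I would close by comparison: $\phi$ sends this spanning set bijectively onto the PBW basis \eqref{basis2} of $\sU(\afgl)$, with the polynomial factor $\mbq[Z_s:s\neq 0]$ matching the imaginary directions $\mathrm{Id}\otimes t^s$. A surjective linear map carrying a spanning set onto a basis is an isomorphism, so $\phi$ is injective and the presentation follows. The main obstacle is exactly the straightening step, namely proving that (UR4)--(UR5) are strong enough to reduce every monomial in the Chevalley generators to the ordered PBW shape without any collapse of dimension; equivalently, that the Lie algebra presented by these relations is precisely $\afgl$ rather than some larger quotient. This is the Gabber--Kac phenomenon in the present loop setting, and the reason it works here is that relation (UR3) at the affine node $i=n$ directly yields the genuine Cartan element $H_n-H_1$ instead of introducing a new central charge, so no central extension of $\afgl$ can arise.
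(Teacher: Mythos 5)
The paper itself contains no proof of this Lemma: it is quoted directly from \cite[Thm. 6.1.1]{DDF}, so your argument must be judged on its own merits rather than against an in-paper argument. Much of its architecture is sound: the splitting of $\afgl$ into $\big(\mathfrak{sl}_n\otimes\mbq[t,t^{-1}]\big)\oplus\mbq\,\mathrm{Id}$ plus the central directions $\mathrm{Id}\otimes t^s$, the formal factorization $\mathfrak U\cong\mathfrak U'\otimes\mbq[Z_s:s\neq 0]$ coming from (UR6), and above all the observation that in this presentation the would-be canonical central element is $\sum_{i\in I}[E_i,F_i]=\sum_{i\in I}(H_i-H_{i+1})=0$ by cyclic telescoping, so that the Serre-presented piece is the loop algebra and not its central extension. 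Citing Gabber--Kac for the hard straightening input is legitimate; that in itself is not a gap.

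There is, however, a concrete step that fails as written. The root vectors $\afE_{i,i+sn}$, although indexed by $\sL^+$, are \emph{not} iterated brackets of the $E_i$: the Lie subalgebra of $\afgl$ generated by the $\afE_{i,i+1}$ ($i\in I$) is the positive part of $\afsl$, and it meets the $n$-dimensional imaginary root space $\mathrm{span}\{\afE_{1,1+sn},\ldots,\afE_{n,n+sn}\}$ only in its $(n-1)$-dimensional traceless part, e.g.\ via $[\afE_{i,j},\afE_{j,i+sn}]=\afE_{i,i+sn}-\afE_{j,j+sn}$. Hence monomials in the $E_i$ cannot be straightened into ordered products $\prod_{(i,j)\in\sL^+}\afE_{i,j}^{\,a_{i,j}}$ over all of $\sL^+$, and your final spanning set --- ``the monomials occurring in \eqref{basis2} times monomials in the $Z_s$'' --- double-counts the imaginary directions: its image in $\sU(\afgl)$ is linearly \emph{dependent} (already $Z_s=\sum_i\afE_{i,i+sn}$ is a relation among its elements), so the closing argument that a surjection carrying a spanning set onto a basis is an isomorphism does not apply as stated. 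The repair is routine but must be made: take the $E$- and $F$-parts to be ordered monomials in a basis of the positive (resp.\ negative) part of $\afsl$ --- the real root vectors $\afE_{i,j}$ with $\bar i\neq\bar j$ together with the $n-1$ differences $\afE_{i,i+sn}-\afE_{i+1,i+1+sn}$ in each imaginary degree --- tensor with the $Z$-monomials, and then compare with \eqref{basis2} through the invertible change of basis $\{\afE_{i,i+sn}\}_{1\le i\le n}\leftrightarrow\{\afE_{i,i+sn}-\afE_{i+1,i+1+sn}\}_{1\le i<n}\cup\{Z_s\}$ on each imaginary root space, which induces a triangular (hence invertible) substitution between the corresponding PBW monomial families.
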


\begin{Coro}\label{lem0}
The universal enveloping algebras $\sU(\afgl)$ is the $\mathbb{Q}$-algebra generated by $E_i$, $F_i$, $H_i$, $E_{i,i+mn}$, $i\in I, m\in\mbz\backslash\{0\}$ with relations  (UR1)--(UR5) and
\begin{itemize}
\item[(UR6)$'$]
$E_{i,i+mn}H_j=H_jE_{i,i+mn};$
${}E_{i,i+mn}E_{j,j+ln}=E_{j,j+ln}E_{i,i+mn};$
\item[(UR7)$'$]
$\sum\limits_{1\leq i\leq n}E_{i,i+mn}E_j=E_j\sum\limits_{1\leq i\leq n}E_{i,i+mn};$
$\sum\limits_{1\leq i\leq n}E_{i,i+mn}F_j=F_j\sum\limits_{1\leq i\leq n}E_{i,i+mn};$
\item[(UR8)$'$]
$[X_{i,m},[[\ldots[E_i,E_{i+1}],\ldots],E_n]]
=E_{1,1+mn}-E_{i,i+mn}$ for $i\neq 1$ and $m>0$, where $X_{i,m}=[[\ldots,[[E_1,E_{2,2+(m-1)n}],E_2],\ldots],E_{i-1}]$;
\item[(UR9)$'$]
$[[[F_n,\ldots,[F_{i+1},F_{i}]\ldots]],Y_{i,m}]
=E_{1,1-mn}-E_{i,i-mn}$
for $i\neq 1$ and $m>0$, where $Y_{i,m}=[F_{i-1},[\ldots,[F_2,
[E_{2,2-(m-1)n},F_1]],\ldots]]$.
\end{itemize}
\end{Coro}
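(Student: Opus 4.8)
The plan is to deduce this presentation from the one in the preceding Lemma by a change of generating set, trading the central elements $Z_s=\sum_{1\le h\le n}\afE_{h,h+sn}$ for the individual matrices $\afE_{i,i+mn}$. Let $U'$ be the abstract $\mbq$-algebra defined by the generators and relations in the statement, and let $\Psi\colon U'\to\sU(\afgl)$ be the putative homomorphism sending each generator to its realization in the loop algebra, namely $E_i\mapsto\afE_{i,i+1}$, $F_i\mapsto\afE_{i+1,i}$, $H_i\mapsto\afE_{i,i}$ and $E_{i,i+mn}\mapsto\afE_{i,i+mn}$. I would prove the corollary by exhibiting a two-sided inverse $\Phi\colon\sU(\afgl)\to U'$, defining $\Phi$ through the Lemma's presentation and $\Psi$ through the new presentation, and then checking that both composites fix the generators.

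First I would check that $\Psi$ is well defined, i.e. that (UR1)--(UR5) and (UR6)$'$--(UR9)$'$ hold among the above matrices in $\sU(\afgl)$. Relations (UR1)--(UR5) are contained in the Lemma, so nothing is needed there. Under the identification $\afE_{i,i+mn}\leftrightarrow E_{i,i}\ot t^m$ one has $[E_{i,i}\ot t^m,E_{j,j}\ot t^l]=0$, which yields (UR6)$'$, while $Z_m=(\sum_i E_{i,i})\ot t^m$ is a scalar matrix times $t^m$ and is therefore central, giving (UR7)$'$. The substantive part, which I expect to be the main obstacle, is (UR8)$'$ and (UR9)$'$.

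For these I would evaluate the iterated brackets directly in $\afgl$. Writing $E_j=E_{j,j+1}\ot t^0$ for $j<n$ and $E_n=E_{n,1}\ot t$, a short induction along the bracket chains gives $X_{i,m}=E_{1,i}\ot t^{m-1}=\afE_{1,i+(m-1)n}$ and $[[\cdots[E_i,E_{i+1}],\dots],E_n]=E_{i,1}\ot t=\afE_{i,1+n}$ (using $i\neq1$ in the last step), whence
\[
[X_{i,m},[[\cdots[E_i,E_{i+1}],\dots],E_n]]=[E_{1,i},E_{i,1}]\ot t^m=(E_{1,1}-E_{i,i})\ot t^m=\afE_{1,1+mn}-\afE_{i,i+mn},
\]
which is exactly (UR8)$'$. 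Relation (UR9)$'$ follows from the transposed computation using the $F_j$ and negative powers of $t$. This shows $\Psi$ is a homomorphism.

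Finally I would check $\Phi$ and the two composites. Define $\Phi$ by $E_i,F_i,H_i\mapsto$ the like-named generators of $U'$ and $Z_s\mapsto\sum_{1\le i\le n}E_{i,i+sn}$; this requires only (UR1)--(UR6), and (UR6) is forced by (UR6)$'$ and (UR7)$'$, since the four commutativity statements in (UR6) are the instances of (UR6)$'$ and (UR7)$'$ summed over $i$. The identity $\Psi\Phi=\mathrm{id}$ is immediate on $E_i,F_i,H_i$ and on $Z_s$, as $\Psi\Phi(Z_s)=\sum_i\afE_{i,i+sn}=Z_s$. For $\Phi\Psi=\mathrm{id}$ the only nonobvious generators are the $E_{i,i+mn}$, and here I would induct on $|m|$: applying $\Phi$ to the identity (UR8)$'$ now valid in $\sU(\afgl)$, and using $\Phi(\afE_{2,2+(m-1)n})=E_{2,2+(m-1)n}$ from the inductive hypothesis, gives $\Phi(\afE_{1,1+mn})-\Phi(\afE_{i,i+mn})=E_{1,1+mn}-E_{i,i+mn}$ for each $i\neq1$; combining these with $\sum_i\Phi(\afE_{i,i+mn})=\Phi(Z_m)=\sum_i E_{i,i+mn}$ and solving the resulting linear system (where one divides by $n$, legitimate over $\mbq$) forces $\Phi(\afE_{i,i+mn})=E_{i,i+mn}$. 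The case $m<0$ uses (UR9)$'$ identically, and the base $m=0$ is $\Phi(H_i)=H_i$. Hence $\Phi$ and $\Psi$ are mutually inverse and $U'\cong\sU(\afgl)$.
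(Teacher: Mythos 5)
Your proposal is correct and follows essentially the same route as the paper: it sets up the two mutually inverse homomorphisms (your $\Psi,\Phi$ are the paper's $f,g$, with $g$ defined on $Z_s$ via the sums $\sum_i E_{i,i+sn}$), and proves $\Phi\Psi=\mathrm{id}$ by the same induction on $|m|$, applying (UR8)$'$/(UR9)$'$ and summing over $i$ to solve for each $\Phi(\afE_{i,i+mn})$ over $\mbq$. The only difference is that you spell out the Lie-bracket verification of (UR8)$'$ in $\afgl$ (computing $X_{i,m}=\afE_{1,i+(m-1)n}$, etc.), which the paper leaves implicit when asserting that $f$ is well defined.
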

\begin{proof}
Let $\sU$ be the $\mathbb{Q}$-algebra generated by $E_i, F_i, H_i$ and $E_{i,i+mn}$ $(i\in I, m\in\mbz\backslash\{0\})$ with the given presentation. There is a surjective $\mathbb{Q}$-algebra homomorphism
$f: \sU\rightarrow \sU(\afgl)$ such that $f(E_i)=\afE_{i,i+1}$, $f(F_i)=\afE_{i+1,i}$, $f(H_i)=\afE_{i,i}$ and $f(E_{i,i+mn})=\afE_{i,i+mn}$.
On the other hand, there is a $\mathbb{Q}$-algebra homomorphism
$g: \sU(\afgl)\rightarrow \sU$ such that $g(\afE_{i,i+1})=E_i$, $g(\afE_{i+1,i})=F_i$, $g(\afE_{i,i})=H_i$ and $g(\sum\limits_{1\leq i\leq n}\afE_{i,i+mn})=\sum\limits_{1\leq i\leq n}E_{i,i+mn}$.

It is clear that we have $f\circ g=id$. In order to prove that $g\circ f=id$, it is enough to prove that $g(\afE_{i,i+mn})=E_{i,i+mn}$ for $i\in I$, $m\in\mbz\backslash\{0\}$.
We use induction on $m$.
By (UR8)$'$ we have $g(\afE_{1,1+n}-\afE_{i,i+n})=E_{1,1+n}-E_{i,i+n}$ for $1\leq i\leq n$.
This implies that $g(n\afE_{1,1+n})-g(\sum\limits_{1\leq i\leq n}\afE_{i,i+n})=nE_{1,1+n}-\sum\limits_{1\leq i\leq n}E_{i,i+n}$. It follows that $g(\afE_{1,1+n})=E_{1,1+n}$ and hence
$g(\afE_{i,i+n})=E_{i,i+n}$ for any $1\leq i\leq n$.
Assume now that $m>1$. Then by induction and (UR8)$'$, we conclude that $g(\afE_{1,1+mn}-\afE_{i,i+mn})=E_{1,1+mn}-E_{i,i+mn}$ for any $i>1$. It follows that $g(n\afE_{1,1+mn})-g(\sum\limits_{1\leq i\leq n}\afE_{i,i+mn})=nE_{1,1+mn}-\sum\limits_{1\leq i\leq n}E_{i,i+mn}$ and hence $g(\afE_{i,i-mn})=E_{i,i-mn}$ for $m>0$.
Similarly, we have $g(\afE_{i,i-mn})=E_{i,i-mn}$ with $m>0$.
The assertion follows.
\end{proof}

Let $\sZ=\mbz[\up,\up^{-1}]$, where $\up$ is an indeterminate.
For $r\geq 0$ let $\afSr$\footnote{The
algebra $\afSr$ is denoted by  $\frak U_{r,n,n;\sA}$ in \cite[1.10]{Lu99}.} be the algebra over $\sZ$ defined in \cite[1.10]{Lu99}.
Let $\afbfSr=\afSr\ot_\sZ\mbq(\up)$. The algebras $\afSr$ and $\afbfSr$ are called affine quantum Schur algebras.
The algebra $\afSr$ has a normalized  $\sZ$-basis $\{[A]\mid A\in\afThnr\}$,
where $
\afThnr=\big\{A\in\afThn\,\big|\,\sg(A):=\sum_{1\leq i\leq n,\,
j\in\mbz}a_{i,j}=r\big\}.$

Let $\afSrmbq=\afSr\otimes \mathbb{Q}$, where $\mbq$ is regarded as a $\sZ$-module by specializing $\up$ to $1$. The algebra $\afSrmbq$ is the affine Schur algebra over $\mbq$. For $A\in\afThnr$ the image of $[A]$ in $\afSrmbq$ will be denoted by $[A]_1$.

Let $\afmbzn:=\{(\la_i)_{i\in\mbz}\mid
\la_i\in\mbz,\,\la_i=\la_{i-n}\ \text{for}~\ i\in\mbz\}$, $\afmbnn:=\{(\la_i)_{i\in\mbz}\in\afmbzn\mid
\la_i\geq 0\ \}$. For $r\geq 0$, let
$\afLanr=\{\la\in\afmbnn\mid\sg(\la):=\sum_{1\leq i\leq n}\la_i=r\}.$
For $A\in\afThnpm$ and ${\bf j}\in \afmbnn$, define in
$\afSrmbq$ (cf. \cite[(3.0.3)]{Fu09})
$$A[{\bf j},r]=\sum_{\la\in\La_\vtg(n,r-\sg(A))}
\la^\bfj[A+\diag(\la)]_1,$$
where
$\la^\bfj=\prod_{i=1}^n\la_i^{j_i}$.

For $i\in I$ let  $\afbse_i\in\afmbnn$ be the element
satisfying $(\afbse_i)_j=\dt_{i,j}$ for $j\in I$.
By \cite[Thm. 6.1.5]{DDF}, there is an algebra homomorphism
\begin{equation}\label{surjective}
\eta_r:\sU(\afgl)\lra\afSrmbq,
\end{equation}
such that $\eta_r(\afE_{i,i})=0[\afbse_i,r]$ and $\eta_r(\afE_{i,j})=\afE_{i,j}[0,r]$ for $i\neq j$.

\section{Multiplication formulas in affine Schur algebras}

In \cite[Thm. 6.2.2]{DDF}, the multiplication formulas for $\afE_{i,j}[\bfl,r]A[\bfj,r]$ were derived in the case where either $|j-i|=1$ or $j=i+mn$ for some nonzero integer $m$. We will use it to derive the multiplication formulas for $\afE_{i,j}[\bfl,r]A[\bfj,r]$ for any $i\not=j$ in \ref{generalization of Multiplication Formulas at v=1}.

For $A\in\afThn$, let $\ro(A)=\big(\sum_{j\in\mathbb{Z}}a_{i,j}\big)_{i\in\mathbb{Z}}\ \ ~\text{and}~\ \ \co(A)=\big(\sum_{i\in\mathbb{Z}}a_{i,j}\big)_{j\in\mathbb{Z}}.$ Note that
if $A,B\in\afThnr$ is such that $\co(B)=\ro(A)$ then $[B]_1\cdot [A]_1=0$ in $\afSrmbq$.
For convenience, we set $[A]_1=0\in\afSrmbq$ if one
of the entries of $A$ is negative.
The following multiplication formulas in the affine Schur algebra
$\afSrmbq$ are proved in \cite[Prop. 6.2.3]{DDF}.
\begin{Thm}\label{MFforSBE}
Let $1\leq h\leq n$, $A=(a_{i,j})\in\afThnr$, and $\la=\ro(A)$. The following multiplication formulas hold in $\afSrmbq:$
\begin{itemize}
\item[(1)] If $\varepsilon\in\{1,-1\}$ and $\la_{h+\vep}\geq 1$, then
$$[E^\vtg_{h,h+\varepsilon}+
\diag(\la-\bse^\vtg_{h+\varepsilon})]_1\cdot[A]_1
=\sum_{{i\in\mbz}}
(a_{h,i}+1)[A+E_{h,i}^\vtg-E_{h+\varepsilon,i}^\vtg]_1.$$

\item[(2)] If $m\in\mbz\backslash\{0\}$ and $\la_h\geq 1$, then
$$[\afE_{h,h+mn}+\diag(\la-\afbse_h)]_1\cdot[A]_1=\sum_{s\in\mbz}(a_{h,s+mn}+1)[A+\afE_{h,s+mn}-\afE_{h,s}]_1.$$
\end{itemize}
\end{Thm}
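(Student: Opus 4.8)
The plan is to deduce both identities from the action of the Chevalley and loop generators on the geometric basis of the integral quantum Schur algebra, specialising $\up\mapsto1$ only at the end; the first move is a purely formal reduction turning each left-hand side into such a generator action. From the definition of $A[\bfj,r]$ one has $\afE_{h,h+\varepsilon}[0,r]=\sum_{\nu\in\Lambda_\vtg(n,r-1)}[E^\vtg_{h,h+\varepsilon}+\diag(\nu)]_1$, and since $\co(E^\vtg_{h,h+\varepsilon}+\diag(\nu))=\nu+\bse^\vtg_{h+\varepsilon}$ while $[B]_1\cdot[A]_1$ vanishes unless $\co(B)=\ro(A)=\la$, only the summand with $\nu=\la-\bse^\vtg_{h+\varepsilon}$ survives right multiplication by $[A]_1$. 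The hypothesis $\la_{h+\varepsilon}\ge1$ is exactly what makes this index have nonnegative entries, so that $[E^\vtg_{h,h+\varepsilon}+\diag(\la-\bse^\vtg_{h+\varepsilon})]_1$ is a genuine basis vector and the left-hand side of (1) equals $\afE_{h,h+\varepsilon}[0,r]\cdot[A]_1$; since $\co(\afE_{h,h+mn})=\afbse_h$, the same computation (using $\la_h\ge1$) shows the left-hand side of (2) equals $\afE_{h,h+mn}[0,r]\cdot[A]_1$. Because $\afE_{h,h+1}[0,r]=\eta_r(E_h)$, $\afE_{h,h-1}[0,r]=\eta_r(F_{h-1})$ and $\afE_{h,h+mn}[0,r]=\eta_r(\afE_{h,h+mn})$, both statements have become assertions about how the generators act on the basis $\{[A]_1\}$.

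For the computation itself I would lift to the integral quantum Schur algebra $\afSr$ over $\sZ$, realised as a convolution algebra of $\mathrm{GL}$-invariant functions on pairs of periodic lattice chains (equivalently, via the Ringel--Hall algebra of the cyclic quiver), where every structure constant is a Laurent polynomial in $\up$ and $\afSrmbq$ is recovered at $\up=1$. For the near-diagonal factor $E^\vtg_{h,h+\varepsilon}$ the product is local: the intermediate lattice chains contributing to the coefficient of $[A+E^\vtg_{h,i}-E^\vtg_{h+\varepsilon,i}]_1$ are counted, up to a monomial power of $\up$, by a quantum integer specialising to $a_{h,i}+1$, precisely as in the finite Beilinson--Lusztig--MacPherson formula restricted to the rows $h,h+\varepsilon$. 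Summing over $i\in\mbz$ and setting $\up=1$, at which stage every $\up$-power collapses to $1$ and each quantum integer becomes $a_{h,i}+1$, yields (1). Formula (2) runs along the same lines with the simple step $E^\vtg_{h,h+\varepsilon}$ replaced by the loop shift $t^{m}$: the intermediate configurations transfer one unit of mass within row $h$ from column $s$ to column $s+mn$, contributing $[A+\afE_{h,s+mn}-\afE_{h,s}]_1$ with a count that degenerates at $\up=1$ to $a_{h,s+mn}+1$, and summing over $s\in\mbz$ gives (2).

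The step I expect to be the real obstacle is the counting behind (2). Formula (1) has an essentially finite-type local model, so its $\up$-analogue is the familiar action of a Chevalley generator on the BLM basis and the passage to $\up=1$ is routine. The loop-direction product $\afE_{h,h+mn}[0,r]\cdot[A]_1$, by contrast, sees the full periodic geometry and admits no reduction to the $\mathfrak{gl}_n$ situation, so proving that the intermediate-configuration count collapses to exactly $a_{h,s+mn}+1$ for each $s$ needs the affine cyclic-quiver Hall-algebra apparatus of \cite{DDF} rather than a bare flag count. Once that is secured, assembling the formulas is immediate, and the convention $[C]_1=0$ for matrices with a negative entry automatically truncates the sums over $i$ and $s$ to their finitely many nonzero terms.
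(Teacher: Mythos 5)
The paper contains no proof of this theorem: it is quoted directly from \cite[Prop.~6.2.3]{DDF} (``are proved in [DDF, Prop.~6.2.3]''), so your attempt has to be measured against what that citation actually covers. Your opening reduction is correct: since $\afE_{h,h+\varepsilon}[0,r]=\sum_{\nu\in\Lambda_\vtg(n,r-1)}[E^\vtg_{h,h+\varepsilon}+\diag(\nu)]_1$ and a product $[B]_1\cdot[A]_1$ vanishes unless $\co(B)=\ro(A)$ (you use the correct form of this support condition; the paper states it with an evident typo, omitting the negation), exactly one summand survives against $[A]_1$, and the hypotheses $\la_{h+\varepsilon}\geq 1$, $\la_h\geq 1$ make that summand a genuine basis element. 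Your route for part (1) is also viable: a quantum formula for products with Chevalley-type almost-diagonal matrices does exist (the affine BLM-type formula reproduced in \cite{DDF}), and evaluating it at $\up=1$ gives (1).

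The genuine gap is in part (2), and it is not just the ``hard counting'' you flag. Your plan --- compute $[\afE_{h,h+mn}+\diag(\mu)]\cdot[A]$ in the integral quantum algebra and then set $\up=1$ --- is precisely the step that the cited literature does not supply: no closed quantum formula for loop-type products was available in \cite{DDF}, and the absence of such formulas is exactly why the realization of the loop algebra was proved first in the classical case \cite[Ch.~6]{DDF} and only later, by entirely different Hecke-algebra methods, in the quantum case \cite{DF13} --- a point the paper's own introduction makes. Your geometric picture also understates the problem: for $(L,L')$ in relative position $\afE_{h,h+mn}+\diag(\mu)$ the periodicity forces the two chains to differ in \emph{every} degree, with graded codimension $|m|$, so the intermediate configurations are not single one-unit transfers and the count is not a finite-type local count; the collapse of the product onto the matrices $A+\afE_{h,s+mn}-\afE_{h,s}$ with coefficients $a_{h,s+mn}+1$ is a phenomenon special to $\up=1$. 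Asserting that the count ``degenerates at $\up=1$ to $a_{h,s+mn}+1$'' is therefore a restatement of the theorem rather than a proof of it; at that point your argument, like the paper itself, is simply invoking \cite[Prop.~6.2.3]{DDF}.
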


Furthermore we have the following multiplication formulas in
$\afSrmbq$ given in \cite[Thm. 6.2.2]{DDF}. For simplicity, we  set $A[\bfj,r]=0$ if some off-diagonal entries of
$A$ are negative.
\begin{Thm}\label{Multiplication Formulas at v=1}
Assume $h,l\in\mbz$, $\bfj\in\afmbnn$, and $A\in\afThnpm$. The
following multiplication formulas hold in $\afSrmbq:$
\begin{itemize}
\item[(1)] $0[\afbse_l,r]  A[\bfj,r]=A[\bfj+\afbse_l,r]
+\bigl(\sum_{s\in\mbz}a_{l,s}\bigr)A[\bfj,r]$;
\item[(2)] for $\varepsilon\in\{1,-1\}$,
\begin{equation*}\label{MF1}
\begin{split}
 E^\vtg_{h,h+\varepsilon}[\bfl,r]  A[\bfj,r] =
 &\sum_{t\not=h,h+\varepsilon}
(a_{h,t}+1)(A+E^\vtg_{h,t}-E^\vtg_{h+\varepsilon,t})[\bfj,r] \\
&+\sum_{0\leq t\leq j_h}(-1)^t\bigg({j_h\atop t}\bigg)
(A-E^\vtg_{h+\varepsilon,h})[\bfj+(1-t)\afbse_h,r] \\
&+(a_{h,h+\varepsilon}+1)\sum_{0\leq t\leq j_{h+\varepsilon}}\bigg({j_{h+\varepsilon}\atop
t}\bigg)(A+E^\vtg_{h,h+\varepsilon})[\bfj-t\afbse_{h+\varepsilon},r];
\end{split}
\end{equation*}
\item[(3)] for $m\in \mbz\backslash\{0\}$,
\begin{equation*}
\begin{split}
\afE_{h,h+mn}[\bfl,r]A[\bfj,r] &=\sum_{s\not\in\{h,h-mn\}}(a_{h,s+mn}+1)(A+\afE_{h,s+mn}-\afE_{h,s})[\bfj,r]\\
&\quad+\sum_{0\leq t\leq j_h}(-1)^t\left({j_h\atop t}
\right)(A-\afE_{h,h-mn})[\bfj+(1-t)\afbse_h,r]\\
&\quad+(a_{h,h+mn}+1)\sum_{0\leq t\leq j_h}\left({j_h\atop
t}\right)(A+\afE_{h,h+mn})[\bfj-t\afbse_h,r].\\
\end{split}
\end{equation*}
\end{itemize}
\end{Thm}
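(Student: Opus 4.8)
The plan is to derive all three formulas by reducing to the elementary products computed in Theorem~\ref{MFforSBE}. First I would expand each factor in the normalized basis $\{[\,\cdot\,]_1\}$: by definition $A[\bfj,r]=\sum_{\la\in\Lambda_\vtg(n,r-\sg(A))}\la^\bfj[A+\diag(\la)]_1$, while the left-hand factors $E^\vtg_{h,h+\varepsilon}[\bfl,r]=\sum_\nu[E^\vtg_{h,h+\varepsilon}+\diag(\nu)]_1$ and $\afE_{h,h+mn}[\bfl,r]=\sum_\nu[\afE_{h,h+mn}+\diag(\nu)]_1$ are plain sums since $\la^{\bfl}=1$. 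Multiplying out produces a double sum over $\nu,\la$, and since the diagonal elements $[\diag(\nu)]_1$ are orthogonal idempotents, the product $[E^\vtg_{h,h+\varepsilon}+\diag(\nu)]_1\cdot[A+\diag(\la)]_1$ survives only when $\co(E^\vtg_{h,h+\varepsilon}+\diag(\nu))=\ro(A+\diag(\la))$, which forces $\nu=\ro(A+\diag(\la))-\bse^\vtg_{h+\varepsilon}$ and collapses the double sum to a single sum over $\la$. Crucially, this surviving $\nu$ is precisely the diagonal shift $\ro(A+\diag(\la))-\bse^\vtg_{h+\varepsilon}$ occurring on the left-hand side of Theorem~\ref{MFforSBE}(1), so that theorem applies verbatim to each term.

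Part~(1) needs no case analysis. Writing $0[\afbse_l,r]=\sum_\nu\nu_l[\diag(\nu)]_1$ and using idempotency gives $0[\afbse_l,r]\cdot A[\bfj,r]=\sum_\la\big(\la_l+\ro(A)_l\big)\la^\bfj[A+\diag(\la)]_1$; splitting $\la_l\la^\bfj=\la^{\bfj+\afbse_l}$ off the constant $\ro(A)_l=\sum_s a_{l,s}$ yields the two stated terms at once. For parts~(2) and~(3) I would substitute the relevant case of Theorem~\ref{MFforSBE} into each surviving product and sort the result by the moving column index. In~(2) one obtains $\sum_i(a'_{h,i}+1)[A+\diag(\la)+E^\vtg_{h,i}-E^\vtg_{h+\varepsilon,i}]_1$ with $a'_{h,i}$ the $(h,i)$-entry of $A+\diag(\la)$; since $A\in\afThnpm$ has zero diagonal, $a'_{h,i}=a_{h,i}$ for $i\neq h$ and $a'_{h,h}=\la_h$, so I would separate the classes $i\notin\{h,h+\varepsilon\}$, $i=h+\varepsilon$, and $i=h$. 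The generic indices leave the matrix off-diagonal, and resumming over $\la$ reassembles $(A+E^\vtg_{h,t}-E^\vtg_{h+\varepsilon,t})[\bfj,r]$, giving the first line. Part~(3) is identical via Theorem~\ref{MFforSBE}(2), the two degenerate indices now being $s=h$ and $s=h-mn$, the values making $\afE_{h,s}$ or $\afE_{h,s+mn}$ diagonal.

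The combinatorial content sits in the degenerate cases, where the move changes a diagonal entry. After shifting the summation variable one is left with a weight of the form $(\la'_{h+\varepsilon}+1)^{j_{h+\varepsilon}}$ (from $i=h+\varepsilon$) or $\la''_h(\la''_h-1)^{j_h}$ (from $i=h$). Expanding by the binomial theorem, $(x+1)^{j}=\sum_t\binom{j}{t}x^{j-t}$ and $x(x-1)^{j}=\sum_t(-1)^t\binom{j}{t}x^{j+1-t}$, and reading the coefficient of each power of the shifted variable as the weight defining an element $(\,\cdot\,)[\bfj',r]$, recombines these sums exactly into $(a_{h,h+\varepsilon}+1)\sum_t\binom{j_{h+\varepsilon}}{t}(A+E^\vtg_{h,h+\varepsilon})[\bfj-t\afbse_{h+\varepsilon},r]$ and $\sum_t(-1)^t\binom{j_h}{t}(A-E^\vtg_{h+\varepsilon,h})[\bfj+(1-t)\afbse_h,r]$. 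The same two identities finish part~(3), both shifts now landing in the single component $\afbse_h$.

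I expect the main obstacle to be bookkeeping rather than conceptual. The delicate point is keeping the range of $\la$ consistent on the two sides: the positivity constraint $\nu\geq 0$ (equivalently the hypothesis $\la_{h+\varepsilon}\geq 1$ of Theorem~\ref{MFforSBE}(1) applied to $A+\diag(\la)$) together with the nonnegativity of the shifted diagonals must line up, so that the boundary terms which formally yield a basis element with a negative entry are correctly absorbed by the conventions $[\,\cdot\,]_1=0$ and $A[\bfj,r]=0$ for negative entries. In part~(3) there is the additional wrinkle of the affine $n$-periodicity and the fact that the column index $s$ runs over all of $\mbz$; here I would check that $h$ and $h-mn$ are genuinely distinct (true since $mn\neq 0$) and that the reindexing respects periodicity, after which the binomial bookkeeping is word-for-word that of part~(2).
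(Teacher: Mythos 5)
Your argument is correct: I checked the collapse of the double sum via orthogonality of the diagonal idempotents (the surviving $\nu=\ro(A+\diag(\la))-\bse^\vtg_{h+\varepsilon}$ is exactly the shift appearing in Theorem \ref{MFforSBE}), the three-way split of the column index into its generic and two degenerate values, the binomial resummations $(x+1)^{j}=\sum_t\binom{j}{t}x^{j-t}$ and $x(x-1)^{j}=\sum_t(-1)^t\binom{j}{t}x^{j+1-t}$, and the boundary bookkeeping (the conventions $[\,\cdot\,]_1=0$ and $A[\bfj,r]=0$ on negative entries, plus the fact that the exponent $j_h+1-t\geq 1$ kills the terms with vanishing $h$-th coordinate), and they all line up. However, the paper itself contains no proof of this theorem: it is quoted from \cite[Thm.~6.2.2]{DDF}, just as Theorem \ref{MFforSBE} is quoted from \cite[Prop.~6.2.3]{DDF}. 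So what you have really supplied is a derivation of one cited result from the other, which makes the paper self-contained modulo Theorem \ref{MFforSBE} alone. Your mechanism is precisely the one the paper gestures at when it asserts that Proposition \ref{generalization of Multiplication Formulas at v=1} can be proved from Proposition \ref{generalization2 of MFforSBE} ``in a way similar to the proof of [DDF, Thm.~6.2.2]''; indeed, running your argument verbatim with Proposition \ref{generalization2 of MFforSBE} in place of Theorem \ref{MFforSBE} proves that more general statement, of which parts (2) and (3) here are the special cases $k=h+\varepsilon$ and $k=h+mn$. One caveat worth recording: in the source \cite{DDF} the numbering (Thm.~6.2.2 before Prop.~6.2.3) suggests the original logical order is the reverse of yours, with the basis-level formulas extracted from the $A[\bfj,r]$-formulas by truncating with diagonal idempotents; your derivation inverts that dependence. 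This creates no circularity within the present paper, which takes Theorem \ref{MFforSBE} as given, but you should state explicitly that Theorem \ref{MFforSBE} is your primitive input.
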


We now use \ref{MFforSBE}(1) to prove the following multiplication formulas in $\afSrmbq$.
\begin{Lem}\label{generalization1 of MFforSBE}
Assume $1\leq i\not=j\leq n$. Let $A\in\afThnr$ and $\la=\ro(A)$ with $\la_j\geq 1$.
Then we have
$$[\afE_{i,j}+\diag(\la-\afbse_j)]_1\cdot[A]_1=\sum_{t\in\mbz}(a_{i,t}+1)[A+\afE_{i,t}
-\afE_{j,t}]_1.$$
\end{Lem}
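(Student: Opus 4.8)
The plan is to induct on $|i-j|$. By the symmetry between the two triangular directions I may assume $i<j$ (the case $i>j$ is identical after replacing the middle index $i+1$ by $i-1$ and $\varepsilon=1$ by $\varepsilon=-1$ throughout). The base case $j-i=1$ is exactly \ref{MFforSBE}(1) with $h=i$ and $\varepsilon=1$. So suppose $j-i\ge 2$ and that the assertion holds for all admissible pairs of smaller gap. Then the middle index satisfies $1\le i+1\le n$ and $i+1\ne j$, so the pair $(i+1,j)$ has gap $j-i-1$ and is covered by the inductive hypothesis, while $(i,i+1)$ is covered by the base case.

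The engine of the proof is the associativity of the triple product
$$T=[\afE_{i,i+1}+\diag(\la-\afbse_j)]_1\cdot[\afE_{i+1,j}+\diag(\la-\afbse_j)]_1\cdot[A]_1,$$
evaluated in two groupings. Grouping the right-hand pair first, the inductive hypothesis computes $[\afE_{i+1,j}+\diag(\la-\afbse_j)]_1\cdot[A]_1$ (this is where $\ro(A)_j=\la_j\ge1$ is used), and \ref{MFforSBE}(1) then evaluates the remaining adjacent multiplication; after checking that every intermediate diagonal matches the required shape $\ro(\cdot)-\afbse_{i+1}$, this yields
$$T=\sum_{s,t}(a_{i,s}+1)(a_{i+1,t}+1)\,[A+\afE_{i,s}-\afE_{i+1,s}+\afE_{i+1,t}-\afE_{j,t}]_1.$$
Grouping the left-hand pair first, \ref{MFforSBE}(1) applied to $\afE_{i+1,j}+\diag(\la-\afbse_j)$ gives
$$[\afE_{i,i+1}+\diag(\la-\afbse_j)]_1\cdot[\afE_{i+1,j}+\diag(\la-\afbse_j)]_1=[\afE_{i,j}+\diag(\la-\afbse_j)]_1+Q,$$
where the $s=j$ summand produces $\afE_{i,j}$ through cancellation of $\afE_{i+1,j}$, the $s=i+1$ summand produces the single defect term $Q=[\afE_{i,i+1}+\afE_{i+1,j}+\diag(\la-\afbse_j-\afbse_{i+1})]_1$, and all other summands vanish because a negative off-diagonal entry appears. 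Hence $[\afE_{i,j}+\diag(\la-\afbse_j)]_1\cdot[A]_1=T-Q\cdot[A]_1$, and it remains to compute $Q\cdot[A]_1$.

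To compute $Q\cdot[A]_1$ I first observe that $Q$ itself factors: applying the inductive hypothesis to the gap-$(j-i-1)$ pair $(i+1,j)$ shows (only the $t=j$ summand surviving) that
$$Q=[\afE_{i+1,j}+\diag(\la+\afbse_i-\afbse_{i+1}-\afbse_j)]_1\cdot[\afE_{i,i+1}+\diag(\la-\afbse_{i+1})]_1.$$
Substituting this, evaluating $[\afE_{i,i+1}+\diag(\la-\afbse_{i+1})]_1\cdot[A]_1$ by the adjacent formula \ref{MFforSBE}(1), and then applying the inductive hypothesis once more for $(i+1,j)$, produces
$$Q\cdot[A]_1=\sum_{s,t}(a_{i,s}+1)(a_{i+1,t}-\dt_{s,t}+1)\,[A+\afE_{i,s}-\afE_{i+1,s}+\afE_{i+1,t}-\afE_{j,t}]_1,$$
summed over exactly the same index matrices as in $T$. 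Subtracting, every coefficient collapses via $(a_{i+1,t}+1)-(a_{i+1,t}-\dt_{s,t}+1)=\dt_{s,t}$, so only the diagonal terms $s=t$ survive; since then the matrix reduces to $A+\afE_{i,s}-\afE_{j,s}$, one obtains
$$T-Q\cdot[A]_1=\sum_{s}(a_{i,s}+1)\,[A+\afE_{i,s}-\afE_{j,s}]_1,$$
which is the asserted formula.

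The main obstacle is the appearance of the defect $Q$, a basis element with \emph{two} off-diagonal entries, for which no direct multiplication rule is available; the crux is to factor $Q$ back into single-off-diagonal pieces using the same inductive hypothesis so that $Q\cdot[A]_1$ becomes computable, and then to carry out the precise coefficient bookkeeping that forces all off-diagonal ($s\ne t$) contributions to cancel between $T$ and $Q\cdot[A]_1$. Throughout one must also keep track of the convention that any $[B]_1$ with a negative off-diagonal entry is zero: this is what annihilates the unwanted summands above and what handles the degenerate case $\la_{i+1}=0$, in which $Q=0$ and the identity $T-Q\cdot[A]_1$ degenerates harmlessly.
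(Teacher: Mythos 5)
Your proof is correct and takes essentially the same route as the paper's: induction on $|i-j|$, writing $[\afE_{i,j}+\diag(\la-\afbse_j)]_1$ as the difference of two products of single-off-diagonal-entry matrices (your $T$ minus $Q\cdot[A]_1$ bookkeeping is exactly the paper's commutator identity), applying the inductive hypothesis and the adjacent formula \ref{MFforSBE}(1) to each product, and letting $(a_{i+1,t}+1)-(a_{i+1,t}-\dt_{s,t}+1)=\dt_{s,t}$ collapse the double sum. The only cosmetic difference is that you split through the index adjacent to $i$ (with a WLOG reduction to $i<j$), whereas the paper splits through $j+\vep_{i,j}$, the index adjacent to $j$, and thereby treats both triangular cases uniformly.
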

\begin{proof}
We proceed by induction on $|j-i|$. The case $|j-i|=1$ follows from \ref{MFforSBE}(1). Now we assume $|j-i|>1$.
Let
$$
\vep_{i,j}=
\begin{cases}
-1 &\text{if $i<j$,}\\
1 &\text{if $i>j$.}
\end{cases}
$$
Then we have $|j+\vep_{i,j}-i|<|j-i|$ and
\begin{equation}\label{eq for generalization1 of MFforSBE}
\begin{split}
[\afE_{i,j}+\diag&(\la-\afbse_j)]_1=[\afE_{i,j+\vep_{i,j}}
+\diag(\la-\afbse_j)]_1\cdot[\afE_{j+\vep_{i,j},j}
+\diag(\la-\afbse_j))]_1\\
&-[\afE_{j+\vep_{i,j},j}
+\diag(\la-\afbse_j+\afbse_i-\afbse_{j+\vep_{i,j}})]_1
\cdot[\afE_{i,j+\vep_{i,j}}+\diag(\la-\afbse_{j+\vep_{i,j}})]_1.
\end{split}
\end{equation}
By the induction hypothesis we have
\begin{equation*}
\begin{split}
&\quad[\afE_{i,j+\vep_{i,j}}
+\diag(\la-\afbse_j)]_1\cdot[\afE_{j+\vep_{i,j},j}
+\diag(\la-\afbse_j))]_1\cdot[A]_1\\
&=\sum_{s,t\in\mbz}(a_{j+\vep_{i,j},t}+1)(a_{i,s}+1)
[A+\afE_{j+\vep_{i,j},t}-\afE_{j,t}+\afE_{i,s}
-\afE_{j+\vep_{i,j},s}]_1
\end{split}
\end{equation*}
and
\begin{equation*}
\begin{split}
&\quad
[\afE_{j+\vep_{i,j},j}
+\diag(\la-\afbse_j+\afbse_i-\afbse_{j+\vep_{i,j}})]_1
\cdot[\afE_{i,j+\vep_{i,j}}+\diag(\la-\afbse_{j+\vep_{i,j}})]_1\cdot
[A]_1\\
&=\sum_{s,t\in\mbz}(a_{j+\vep_{i,j},t}+1-\dt_{s,t})(a_{i,s}+1)
[A+\afE_{j+\vep_{i,j},t}-\afE_{j,t}+\afE_{i,s}
-\afE_{j+\vep_{i,j},s}]_1.
\end{split}
\end{equation*}
This together with \eqref{eq for generalization1 of MFforSBE} implies that
\begin{equation*}
\begin{split}
[\afE_{i,j}+\diag(\la-\afbse_j)]_1\cdot[A]_1
&=\sum_{s,t\in\mbz}\dt_{s,t}(a_{i,s}+1)
[A+\afE_{j+\vep_{i,j},t}-\afE_{j,t}+\afE_{i,s}
-\afE_{j+\vep_{i,j},s}]_1\\
&=\sum_{t\in\mbz}(a_{i,t}+1)
[A-\afE_{j,t}+\afE_{i,t}]_1
\end{split}
\end{equation*}
as required.
\end{proof}

Using \ref{MFforSBE}(2) and \ref{generalization1 of MFforSBE}, we can prove the following generalization of
\ref{MFforSBE}.
\begin{Prop}\label{generalization2 of MFforSBE}
Assume $1\leq i\leq n$, $j\in\mbz$ and $i\not=j$. Let
$A\in\afThnr$ and $\la=\ro(A)$ with $\la_j\geq 1$. Then
we have
$$[\afE_{i,j}+\diag(\la-\afbse_j)]_1\cdot[A]_1=\sum_{t\in\mbz}(a_{i,t}+1)[A+\afE_{i,t}
-\afE_{j,t}]_1.$$
\end{Prop}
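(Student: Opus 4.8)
The plan is to write $j=j_0+mn$ with $1\le j_0\le n$ and $m\in\mbz$, and to reduce every case to the two formulas already in hand by means of the periodicities $\afbse_j=\afbse_{j_0}$, $\la_j=\la_{j_0}$ and $\afE_{k,l}=\afE_{k+n,l+n}$. If $m=0$ then $j=j_0\in\{1,\ldots,n\}$ and, as $j\ne i$, the claim is exactly Lemma \ref{generalization1 of MFforSBE}. If $m\ne0$ but $j_0=i$, then $j=i+mn$, so $\afbse_j=\afbse_i$ and the left-hand side equals $[\afE_{i,i+mn}+\diag(\la-\afbse_i)]_1\cdot[A]_1$; this is computed by Theorem \ref{MFforSBE}(2), and substituting $t=s+mn$ together with $\afE_{j,t}=\afE_{i,t-mn}$ turns its value $\sum_s(a_{i,s+mn}+1)[A+\afE_{i,s+mn}-\afE_{i,s}]_1$ into $\sum_t(a_{i,t}+1)[A+\afE_{i,t}-\afE_{j,t}]_1$, as wanted.

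The genuinely new case is $m\ne0$ and $j_0\ne i$; here $j_0\ne j$ (since $m\ne0$) and $j_0\ne i$ (since $j\not\equiv i\pmod n$), so $\afE_{i,j_0}$ and $\afE_{j_0,j}$ are off-diagonal and $[\afE_{i,j_0},\afE_{j_0,j}]=\afE_{i,j}$ in $\afgl$. Accordingly I would establish, with the intermediate index $j_0$ in the role played by $j+\vep_{i,j}$ in Lemma \ref{generalization1 of MFforSBE}, the analogue of the straightening identity \eqref{eq for generalization1 of MFforSBE}:
\begin{equation*}
\begin{split}
[\afE_{i,j}+\diag(\la-\afbse_j)]_1 &= [\afE_{i,j_0}+\diag(\la-\afbse_j)]_1\cdot[\afE_{j_0,j}+\diag(\la-\afbse_j)]_1\\
&\quad-[\afE_{j_0,j}+\diag(\la-\afbse_j+\afbse_i-\afbse_{j_0})]_1\cdot[\afE_{i,j_0}+\diag(\la-\afbse_{j_0})]_1,
\end{split}
\end{equation*}
which follows by the same computation as in that lemma (using $\afbse_{j_0}=\afbse_j$). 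Multiplying by $[A]_1$, I would evaluate the first product by applying Theorem \ref{MFforSBE}(2) to $\afE_{j_0,j}=\afE_{j_0,j_0+mn}$ (legitimate as $\la_{j_0}=\la_j\ge1$) and then Lemma \ref{generalization1 of MFforSBE} to $\afE_{i,j_0}$ on each resulting term, and evaluate the second product in the opposite order by the same two results.

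The main obstacle is the bookkeeping of the subtraction. Both products yield a double sum over a pair $(s,u)$ of column indices, over the common matrices $A+\afE_{j_0,s+mn}-\afE_{j_0,s}+\afE_{i,u}-\afE_{j_0,u}$; the first product carries the coefficient $(a_{j_0,s+mn}+1)(a_{i,u}+1)$, while the shifted diagonal in the second product produces the coefficient $(a_{i,u}+1)(a_{j_0,s+mn}+1-\dt_{u,s+mn})$. Subtracting, all terms with $u\ne s+mn$ cancel and the difference of coefficients collapses to $(a_{i,u}+1)\dt_{u,s+mn}$; setting $t=s+mn$ and using $\afE_{j_0,t-mn}=\afE_{j,t}$ leaves precisely $\sum_t(a_{i,t}+1)[A+\afE_{i,t}-\afE_{j,t}]_1$. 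The one remaining point to check is that no hypothesis beyond $\la_j\ge1$ is required: whenever a move would force a negative diagonal or off-diagonal entry (for instance when $\la_{j_0}=1$, which kills the second product entirely), the offending terms vanish on both sides by the convention that $[B]_1=0$ for any matrix $B$ with a negative entry, so the identity stays balanced. This cancellation is the exact analogue of the one closing the proof of Lemma \ref{generalization1 of MFforSBE}, and carrying it out precisely is where the real work lies.
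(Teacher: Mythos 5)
Your proposal is correct and takes essentially the same route as the paper's own proof: you write $j=j_0+mn$ (the paper's $k$), use the identical commutator decomposition of $[\afE_{i,j}+\diag(\la-\afbse_j)]_1$ through the intermediate index $j_0$, evaluate the two products via Theorem \ref{MFforSBE}(2) and Lemma \ref{generalization1 of MFforSBE} in the two opposite orders, and obtain the claim from the same $\dt$-term cancellation. The only difference is organizational: you explicitly split off the cases $m=0$ and $j_0=i$, which the paper handles by citing Theorem \ref{MFforSBE}(2) for $k=i$ and leaving the $m=0$, $k\neq i$ case implicitly to Lemma \ref{generalization1 of MFforSBE}.
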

\begin{proof}
We write $j=k+mn$, where $1\leq k\leq n$ and $m\in\mbz$. The case $k=i$ is given in \ref{MFforSBE}(2). We now assume $k\not=i$.
Then we have
\begin{equation}\label{eq for generalization2 of MFforSBE}
\begin{split}
[\afE_{i,j}+&\diag(\la-\afbse_j)]_1=[\afE_{i,k}+\diag(\la-\afbse_j)]_1
\cdot[\afE_{k,j}+\diag(\la-\afbse_j)]_1\\
&-
[\afE_{k,j}+\diag(\la-\afbse_j+\afbse_i-\afbse_k)]_1\cdot
[\afE_{i,k}+\diag(\la-\afbse_j)]_1.
\end{split}
\end{equation}
By \ref{MFforSBE}(2) and \ref{generalization1 of MFforSBE} we have
$$[\afE_{i,k}+\diag(\la-\afbse_j)]_1
\cdot[\afE_{k,j}+\diag(\la-\afbse_j)]_1\cdot[A]_1=
\sum_{s,t\in\mbz}(a_{k,t}+1)(a_{i,s}+1)[A+\afE_{k,t}-
\afE_{j,t}+\afE_{i,s}-\afE_{k,s}]_1$$
and
\begin{equation*}
\begin{split}
&\quad[\afE_{k,j}+\diag(\la-\afbse_j+\afbse_i-\afbse_k)]_1\cdot
[\afE_{i,k}+\diag(\la-\afbse_j)]_1\cdot[A]_1\\
&=
\sum_{s,t\in\mbz}(a_{k,t}-\dt_{s,t}+1)(a_{i,s}+1)[A+\afE_{k,t}-
\afE_{j,t}+\afE_{i,s}-\afE_{k,s}]_1
\end{split}
\end{equation*}
This together with \eqref{eq for generalization2 of MFforSBE} implies
that
\begin{equation*}
\begin{split}
[\afE_{i,j}+\diag(\la-\afbse_j)]_1\cdot[A]_1
&=\sum_{t\in\mbz}(a_{i,t}+1)[A-\afE_{j,t}+\afE_{i,t}]_1
\end{split}
\end{equation*}
as required.
\end{proof}

The following multiplication formulas, which can be
proved using \ref{generalization2 of MFforSBE} in a way similar to the proof of \cite[Thm. 6.2.2]{DDF}, are the generalization
of \ref{Multiplication Formulas at v=1}(2) and (3).
\begin{Prop}\label{generalization of Multiplication Formulas at v=1}
Assume $h\not=k\in\mbz$ and $A\in\afThnpm$. The
following multiplication formulas hold in $\afSrmbq$:
\begin{equation*}\label{MF1}
\begin{split}
 E^\vtg_{h,k}[\bfl,r]  A[\bfj,r] =
 &\sum_{\forall t\not=h,k}
(a_{h,t}+1)(A+E^\vtg_{h,t}-E^\vtg_{k,t})[\bfj,r] \\
&+\sum_{0\leq t\leq j_h}(-1)^t\bigg({j_h\atop t}\bigg)
(A-E^\vtg_{k,h})[\bfj+(1-t)\afbse_h,r] \\
&+(a_{h,k}+1)\sum_{0\leq t\leq j_{k}}
\bigg({j_{k}\atop
t}\bigg)(A+E^\vtg_{h,k})[\bfj-t\afbse_{k},r].
\end{split}
\end{equation*}
\end{Prop}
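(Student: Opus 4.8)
The plan is to reduce the whole identity to the single–basis–element formula of Proposition~\ref{generalization2 of MFforSBE} and then perform a binomial re-summation. Writing $A[\bfj,r]=\sum_{\mu}\mu^\bfj[A+\diag(\mu)]_1$ with $\mu$ running over $\La_\vtg(n,r-\sg(A))$, it is enough to evaluate $\afE_{h,k}[\bfl,r]\cdot[A+\diag(\mu)]_1$ for each $\mu$ and then take the $\mu^\bfj$-weighted sum. Put $B=A+\diag(\mu)$. Since $\afE_{h,k}[\bfl,r]=\sum_{\nu}[\afE_{h,k}+\diag(\nu)]_1$ and a product $[\afE_{h,k}+\diag(\nu)]_1\cdot[B]_1$ can be nonzero only when $\co(\afE_{h,k}+\diag(\nu))=\ro(B)$, the only possibly nonzero summand is the one with $\nu=\ro(B)-\afbse_k$. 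Applying Proposition~\ref{generalization2 of MFforSBE} to that summand (with its matrix taken to be $B$, its row datum $\ro(B)=\ro(A)+\mu$, and indices $i=h$, $j=k$) gives
\[
\afE_{h,k}[\bfl,r]\cdot[B]_1=\sum_{t\in\mbz}(b_{h,t}+1)\,[B+\afE_{h,t}-\afE_{k,t}]_1,
\]
where $b_{h,t}$ is the $(h,t)$-entry of $B$; both sides vanish when $\ro(B)_k=0$, so nothing is lost.

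I would then split the $t$-sum according to whether $\afE_{h,t}$ or $\afE_{k,t}$ lands on the diagonal. Because $A\in\afThnpm$ has zero diagonal, $b_{h,t}=a_{h,t}$ for $t\ne h$ while $b_{h,h}=\mu_h$. The terms $t\ne h,k$ are genuinely off-diagonal, so after the $\mu^\bfj$-weighted summation they assemble directly into the first line $\sum_{t\ne h,k}(a_{h,t}+1)(A+\afE_{h,t}-\afE_{k,t})[\bfj,r]$. Using $\afE_{h,h}=\diag(\afbse_h)$ and $\afE_{k,k}=\diag(\afbse_k)$, the term $t=h$ contributes $(\mu_h+1)[A-\afE_{k,h}+\diag(\mu+\afbse_h)]_1$ and the term $t=k$ contributes $(a_{h,k}+1)[A+\afE_{h,k}+\diag(\mu-\afbse_k)]_1$.

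The core step is re-summing these two diagonal contributions against $\mu^\bfj$. For $t=h$ I would set $\mu'=\mu+\afbse_h$ and use the identity $\mu^\bfj(\mu_h+1)=\sum_{t=0}^{j_h}(-1)^t\binom{j_h}{t}(\mu')^{\bfj+(1-t)\afbse_h}$, which produces the alternating binomials and yields the middle line $\sum_{t}(-1)^t\binom{j_h}{t}(A-\afE_{k,h})[\bfj+(1-t)\afbse_h,r]$; here one must note that extending the range back to all $\mu'$ only adds terms with $\mu'_h=0$, and these carry the monomial factor $(\mu'_h)^{j_h+1-t}=0$. For $t=k$ I would set $\mu''=\mu-\afbse_k$ and expand $(\mu''_k+1)^{j_k}=\sum_{t}\binom{j_k}{t}(\mu'')^{\bfj-t\afbse_k}$, giving the last line $(a_{h,k}+1)\sum_{t}\binom{j_k}{t}(A+\afE_{h,k})[\bfj-t\afbse_k,r]$; in this case the terms with $\mu_k=0$ already vanish through a negative diagonal entry, so no correction is needed.

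The main obstacle is purely the bookkeeping: keeping the integer index $t\in\mbz$ separate from the mod-$n$ periodicity governing $\afE_{i,j}$, $\afbse_k$ and $j_k$, and checking that the three cases $t\ne h,k$, $t=h$, $t=k$ remain disjoint even when $k\equiv h\pmod n$ (that is, $k=h+mn$). As a sanity check I would specialize: using $\afE_{h+mn,t}=\afE_{h,t-mn}$ together with $j_{h+mn}=j_h$ and $\afbse_{h+mn}=\afbse_h$ recovers Theorem~\ref{Multiplication Formulas at v=1}(3), while $k=h\pm1$ recovers part~(2). No new input beyond Proposition~\ref{generalization2 of MFforSBE} is required; the entire content is the combinatorial resummation just described.
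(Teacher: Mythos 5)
Your proposal is correct and coincides with the paper's intended argument: the paper proves this proposition merely by remarking that it follows from Proposition \ref{generalization2 of MFforSBE} ``in a way similar to the proof of [DDF, Thm.\ 6.2.2]'', and that deferred argument is exactly your expansion of $A[\bfj,r]$ into basis elements, termwise application of Proposition \ref{generalization2 of MFforSBE}, and binomial resummation of the two diagonal contributions. Your write-up in fact supplies the details the paper leaves to the citation (vanishing of both sides when $\ro(B)_k=0$, the $\mu'_h=0$ and $\mu_k=0$ boundary terms, and the case $k\equiv h\pmod n$), modulo the purely notational slip that the $t=k$ identity should be written as $\mu^{\bfj}=\sum_{t}\binom{j_k}{t}(\mu'')^{\bfj-t\afbse_k}$ rather than equating the scalar $(\mu''_k+1)^{j_k}$ to a sum of full monomials.
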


We end this section by constructing the PBW-basis for $\afSrmbq$.
Recall the notation $\sL^+,\sL^-$ introduced in \eqref{L+-}. Let $\sL=\sL^+\cup\sL^-$. For $A\in\afThnr$ let $\bssg(A)=(\sg_i(A))_{i\in\mbz}\in\afLanr$, where
\begin{equation*}\label{sgi(A)}
\sigma_i(A)=a_{i,i}+\sum\limits_{j<i}(a_{i,j}+a_{j,i}).
\end{equation*}
Using \ref{generalization of Multiplication Formulas at v=1}, one can prove the following triangular relation in affine Schur algebras.
\begin{Lem}\label{triangular formulas}
For $A\in\afThnpm$ and $\la\in\afLanr$ we have
\begin{equation*}\label{eqs1}
\begin{split}
&\prod_{(i,j)\in\sL^+}\frac{{\afE_{i,j}[0,r]}^{a_{i,j}}}{a_{i,j}!}
[\diag(\la)]_1\prod_{(i,j)\in\sL^-}\frac{{\afE_{i,j}[0,r]}^{a_{i,j}}}{a_{i,j}!}\\
&\quad=[A+\diag(\la-\bssg(A))]_1
+\sum_{B\in\afThnr\atop\sg(B^++B^-)<\sg(A)}f_{A,\la,B}[B]
\end{split}
\end{equation*}
where $f_{A,\la,B}\in\mbq$ and the products are taken with respect to any fixed total order on $\sL^+$ and $\sL^-$.
\end{Lem}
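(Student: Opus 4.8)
The plan is to induct on $\sg(A)=\sg(A^++A^-)$, using the off-diagonal weight $\sg(B^++B^-)$ of a basis element $[B]_1$ as a grading that controls the error terms. When $\sg(A)=0$ we have $A=0$ and $\bssg(A)=0$, so both sides equal $[\diag(\la)]_1$. The key input is that the sum over diagonals defining a generator collapses against a single basis element: since $\afE_{i,j}[0,r]=\sum_\mu[\afE_{i,j}+\diag(\mu)]_1$ and $[\afE_{i,j}+\diag(\mu)]_1\cdot[B]_1=0$ unless $\mu=\ro(B)-\afbse_j$, Proposition~\ref{generalization2 of MFforSBE} gives
\[
\afE_{i,j}[0,r]\cdot[B]_1=\sum_{t\in\mbz}(b_{i,t}+1)[B+\afE_{i,t}-\afE_{j,t}]_1
\qquad(1\le i\le n,\ i\neq j).
\]
Applying the transpose anti-automorphism $[C]_1\mapsto[\tr C]_1$ yields the right-handed companion $[B]_1\cdot\afE_{i,j}[0,r]=\sum_t(b_{t,j}+1)[B+\afE_{t,j}-\afE_{t,i}]_1$. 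In either formula $\afE_{i,t}$ (resp. $\afE_{t,j}$) creates an off-diagonal box unless $t=i$ (resp. $t=j$), while $\afE_{j,t}$ (resp. $\afE_{t,i}$) destroys one unless $t=j$ (resp. $t=i$); hence multiplication by a generator raises $\sg(B^++B^-)$ by at most $1$.

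For the inductive step assume $\sg(A)>0$ and, say, $A^+\neq0$ (the case $A^+=0$ is symmetric). Let $(i_0,j_0)$ be the smallest element of $\sL^+$, in the fixed total order, with $a_{i_0,j_0}>0$. Factoring the leftmost divided power as ${\afE_{i_0,j_0}[0,r]}^{a_{i_0,j_0}}/a_{i_0,j_0}!=\tfrac1{a_{i_0,j_0}}\,\afE_{i_0,j_0}[0,r]\cdot{\afE_{i_0,j_0}[0,r]}^{a_{i_0,j_0}-1}/(a_{i_0,j_0}-1)!$, the whole product becomes $\tfrac1{a_{i_0,j_0}}\afE_{i_0,j_0}[0,r]\cdot P'$, where $P'$ is the corresponding product for $A':=A-\afE_{i_0,j_0}$. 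By induction $P'=[A'+\diag(\la-\bssg(A'))]_1+(\text{terms of weight}<\sg(A)-1)$. By the weight estimate above the error terms remain of weight $<\sg(A)$ after multiplication, so it suffices to multiply the leading term $[B]_1$, $B=A'+\diag(\la-\bssg(A'))$. The unique summand of top weight $\sg(A)$ is the $t=j_0$ term $\tfrac1{a_{i_0,j_0}}(b_{i_0,j_0}+1)[B+\afE_{i_0,j_0}-\afE_{j_0,j_0}]_1$; since $b_{i_0,j_0}=a_{i_0,j_0}-1$ its coefficient is $1$ and its off-diagonal part is $A'+\afE_{i_0,j_0}=A$.

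It remains to match the diagonal, which is the crux. The diagonal of this term is $(\la-\bssg(A'))-\afbse_{j_0}$, so we must show $\bssg(A)-\bssg(A')=\afbse_{j_0}$. For an off-diagonal matrix $\sg_i=\sum_{j<i}(a_{i,j}+a_{j,i})$, and passing from $A'$ to $A$ inserts $1$ at the periodic positions $(i_0+sn,j_0+sn)$. Because $i_0<j_0$, such a box never occurs as an $a_{i,j}$ with $j<i$; it occurs only as an $a_{j,i}$ with $j=i_0+sn<i=j_0+sn$, contributing to $\sg_{j_0+sn}=\sg_{j_0}$, and for each fixed residue $i\in\{1,\dots,n\}$ exactly one $s$ satisfies $j_0+sn=i$. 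Hence $\sg_i(A)-\sg_i(A')=(\afbse_{j_0})_i$ for all $i$, i.e. $\bssg(A)-\bssg(A')=\afbse_{j_0}$, as needed. The case $A=A^-\neq0$ is identical after peeling the largest $\sL^-$ factor and using the right-handed formula: there the top term is $[B+\afE_{i_0,j_0}-\afE_{i_0,i_0}]_1$ with $i_0>j_0$, whose diagonal shift $-\afbse_{i_0}$ is compensated by $\bssg(A)-\bssg(A')=\afbse_{i_0}$, obtained from the same count applied to the $a_{i,j}$ ($j<i$) contribution.

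The main obstacle is exactly this last bookkeeping: confirming that the diagonal correction $-\afbse_{j_0}$ (resp. $-\afbse_{i_0}$) thrown off by the multiplication formula is cancelled precisely by the change in $\bssg$ under the insertion of one off-diagonal box, compatibly with the periodicity $a_{i,j}=a_{i+n,j+n}$. One should also note that in the degenerate case $\ro(B)_{j_0}=0$ both the intermediate product and the claimed leading term vanish under the convention $[C]_1=0$ for matrices with a negative entry, so the argument goes through unchanged. The remaining ingredients—the collapse of the sum over diagonals, the weight estimate, and the coefficient cancellation $\tfrac1{a_{i_0,j_0}}\big((a_{i_0,j_0}-1)+1\big)=1$—are routine.
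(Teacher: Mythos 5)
Your proof is correct, but it does not follow the route the paper indicates: the paper asserts that Lemma \ref{triangular formulas} follows from Proposition \ref{generalization of Multiplication Formulas at v=1}, i.e.\ from the multiplication formulas for $E^\vtg_{h,k}[\bfl,r]\,A[\bfj,r]$, staying throughout at the level of the elements $A[\bfj,r]$. You instead bypass Proposition \ref{generalization of Multiplication Formulas at v=1} entirely and argue at the level of individual basis elements: the key observation that $\afE_{i,j}[0,r]\cdot[B]_1$ collapses, by the orthogonality of row/column vectors, to the single product $[\afE_{i,j}+\diag(\ro(B)-\afbse_{\bar{j}})]_1\cdot[B]_1$ lets you feed Proposition \ref{generalization2 of MFforSBE} directly into an induction on $\sg(A)$, with the off-diagonal weight $\sg(B^++B^-)$ as the filtration. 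This is arguably the more elementary argument: the weight estimate (a generator raises the weight by at most $1$, with a unique top-weight term), the coefficient cancellation $\frac{1}{a_{i_0,j_0}}\bigl((a_{i_0,j_0}-1)+1\bigr)=1$, and the degenerate cases under the convention $[C]_1=0$ are all transparent. Two caveats you should repair when writing this up. First, for the $\sL^-$ factors you invoke the transpose anti-automorphism $[C]_1\mapsto[\tr C]_1$ of $\afSrmbq$ to obtain the right-handed companion formula; this is true (at $\up=1$ the convolution structure constants are invariant under simultaneous transposition), but it is established nowhere in this paper, so you must either cite it or prove the mirrored multiplication formula by repeating the arguments of \ref{generalization1 of MFforSBE} and \ref{generalization2 of MFforSBE} on the right. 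Second, your sentence ``for each fixed residue $i\in\{1,\dots,n\}$ exactly one $s$ satisfies $j_0+sn=i$'' is false as stated: such an $s$ exists only when $i$ is the residue $\bar{j}_0$ of $j_0$ modulo $n$; the conclusion you draw from it, $\bssg(A)-\bssg(A')=\afbse_{\bar{j}_0}$ (and $\afbse_{\bar{i}_0}$ on the $\sL^-$ side), is nonetheless exactly right, and it is indeed the crux that makes the diagonal of the leading term come out as $\la-\bssg(A)$.
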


The following result is a direct consequence of \ref{triangular formulas}.

\begin{Coro}\label{basis for ASA}
The set
$$\bigg\{
\prod_{(i,j)\in\sL^+}{{\afE_{i,j}[0,r]}}^{a_{i,j}}[\diag(\bssg(A))]
\prod_{(i,j)\in\sL^-}{{\afE_{i,j}[0,r]}}^{a_{i,j}}
\mid A\in\afThnr
\bigg\}
$$
forms a basis for $\afSrl_\mbq$, where the products are taken with respect to any fixed total order on $\sL^+$ and $\sL^-$.
\end{Coro}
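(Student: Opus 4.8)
The plan is to deduce the statement from the triangular relation of \ref{triangular formulas}, together with the fact that $\{[A]_1\mid A\in\afThnr\}$ is already a $\mbq$-basis of $\afSrmbq$ (it is the base change to $\mbq$ of the $\sZ$-basis $\{[A]\}$ of $\afSr$). Since the proposed set is indexed by the \emph{same} set $\afThnr$ as this known basis, it suffices to prove that the transition matrix between the two families is invertible over $\mbq$; I will obtain this from a triangularity with respect to the off-diagonal size $\sg(A^++A^-)$.

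First I would fix $A\in\afThnr$ and write $A=A^++A^-+D$, where $D=\diag\big((a_{i,i})_{i}\big)$ is the diagonal part and $A^++A^-\in\afThnpm$ is the off-diagonal part. Applying \ref{triangular formulas} to the matrix $A^++A^-$ and to the weight $\la=\bssg(A)\in\afLanr$, the key bookkeeping is that $\sg_i(A^++A^-)=\sg_i(A)-a_{i,i}$, so that $\bssg(A)-\bssg(A^++A^-)=(a_{i,i})_i$ and the leading term $[(A^++A^-)+\diag(\bssg(A)-\bssg(A^++A^-))]_1$ is precisely $[A]_1$. Clearing the (nonzero, finite) scalars $a_{i,j}!$, this yields
\[
\prod_{(i,j)\in\sL^+}{\afE_{i,j}[0,r]}^{a_{i,j}}[\diag(\bssg(A))]_1\prod_{(i,j)\in\sL^-}{\afE_{i,j}[0,r]}^{a_{i,j}}
=\Big(\prod_{(i,j)\in\sL}a_{i,j}!\Big)[A]_1+\sum_{\sg(B^++B^-)<\sg(A^++A^-)}g_{A,B}[B]_1
\]
for suitable $g_{A,B}\in\mbq$.

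Next I would read off the structure of the transition matrix $T$, with $T_{A,B}$ the coefficient of $[B]_1$ in the proposed generator indexed by $A$. The identity above shows $T_{A,A}=\prod_{(i,j)\in\sL}a_{i,j}!\neq 0$, while $T_{A,B}\neq 0$ forces $\sg(B^++B^-)<\sg(A^++A^-)$ for $B\neq A$. Ordering $\afThnr$ by any total order refining the partial order given by $\sg(\cdot^++\cdot^-)$, the matrix $T$ is therefore triangular: its diagonal blocks (matrices of equal off-diagonal size) are diagonal with nonzero entries $\prod a_{i,j}!$, since no two distinct matrices of equal off-diagonal size interact. Hence $T$ is invertible over $\mbq$, and because $\{[A]_1\}$ is a basis, so is the proposed set.

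The only points requiring care are the combinatorial identity $\bssg(A)-\bssg(A^++A^-)=(a_{i,i})_i$ that pins the leading term to $[A]_1$, and the observation that the proposed family is genuinely indexed by $\afThnr$ — equivalently that $A$ is recovered from its off-diagonal entries together with $\bssg(A)$ via $a_{i,i}=\sg_i(A)-\sum_{j<i}(a_{i,j}+a_{j,i})$ — which guarantees that $T$ is square. Both are immediate from the definition of $\sg_i$, so there is no real obstacle beyond it; the entire content is the triangularity already furnished by \ref{triangular formulas}.
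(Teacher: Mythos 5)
Your proposal is correct and is exactly the argument the paper intends: the paper states the corollary as a direct consequence of the triangular relation in \ref{triangular formulas}, and your proof simply spells out the details it leaves implicit (the bookkeeping $\bssg(A)-\bssg(A^++A^-)=(a_{i,i})_i$ identifying the leading term as $[A]_1$, and the unitriangular-up-to-nonzero-scalars transition matrix with respect to $\sg(A^++A^-)$ against the known basis $\{[A]_1\mid A\in\afThnr\}$). No discrepancies with the paper's approach.
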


\section{The algebra $\bfTr$}

We now define an algebra $\bfTr$ as below. We will prove in \ref{main} that $\bfTr$ is isomorphic to the affine Schur algebra $\afSrmbq$.

\begin{Def}\label{T(n,r)} Let $n\geq 2$ and $I=\mathbb{Z}/n\mathbb{Z}$. Let $\bfTr$ be the associative algebra over $\mathbb{Q}$ generated by the elements
$${\bffke_i},{\bffkf}_i, \bffkk_\lambda,\tte_{i,i+mn}(i\in I, m\in\mbz\backslash\{0\}, \lambda\in\Lambda_\vtg(n,r))$$
subject to the relations:
\begin{itemize}
\item[(R1)]
$\bffkk_\lambda \bffkk_\mu = \delta_{\lambda,\mu}\bffkk_\lambda, \quad
\sum_{\lambda\in\Lambda_\vtg(n,r)}\bffkk_\lambda = 1$;
\item[(R2)]
$\bffke_i \bffkk_\lambda =\bffkk_{\lambda+\afal_i} \bffke_i$,
$\bffkf_i\bffkk_\lambda=\bffkk_{\lambda-\afal_i} \bffkf_i$ for $\la\in\afmbzn$,
where $\afal_i=\boldsymbol e_i^\vtg-\boldsymbol e_{i+1}^\vtg$ and $\bffkk_\la=0$ for $\la\not\in\afLanr$.
\item[(R3)]
$\bffke_i \bffkf_j - \bffkf_j \bffke_i = \delta_{i,j}\sum\limits_{\lambda\in\Lambda_\vtg(n,r)}
({\lambda_i}-\lambda_{i+1})\bffkk_\lambda$;
\item[(R4)] $\displaystyle\sum_{a+b=1-c_{i,j}}(-1)^a
\bpa{1-c_{i,j}}{a}  \bffke_i^{a}\bffke_j\bffke_i^{b}=0$ for
$i\not=j$;
\item[(R5)] $\displaystyle\sum_{a+b=1-c_{i,j}}(-1)^a
\bpa{1-c_{i,j}}{a} \bffkf_i^{a}\bffkf_j\bffkf_i^{b}=0$ for
$i\not=j$;
\item[(R6)]
$[\ttx_{i,m},[[\ldots[\tte_i,\tte_{i+1}],\ldots],\tte_n]]
=\tte_{1,1+mn}-\tte_{i,i+mn}$ for $i\neq 1$ and $m>0$ where $\ttx_{i,m}=[[\ldots,[[\tte_1,\tte_{2,2+(m-1)n}],\tte_2],\ldots],\tte_{i-1}]$;
\item[(R7)]
$[[[\ttf_n,\ldots,[\ttf_{i+1},\ttf_{i}]\ldots]],\tty_{i,m}]
=\tte_{1,1-mn}-\tte_{i,i-mn}$
for $i\neq 1$ and $m>0$, where $\tty_{i,m}=[\ttf_{i-1},[\ldots,[\ttf_2,
[\tte_{2,2-(m-1)n},\ttf_1]],\ldots]]$;
\item[(R8)]
$\tte_{i,i+mn}\bffkk_\lambda=\bffkk_\lambda\tte_{i,i+mn},$ $\tte_{i,i+mn}\tte_{j,j+ln}=\tte_{j,j+ln}\tte_{i,i+mn};$
\item[(R9)]$
\sum\limits_{1\leq i\leq n}\tte_{i,i+mn}\bffke_j=\bffke_j\sum\limits_{1\leq i\leq n}\tte_{i,i+mn},$ $\sum\limits_{1\leq i\leq n}\tte_{i,i+mn}\bffkf_j=\bffkf_j\sum\limits_{1\leq i\leq n}\tte_{i,i+mn};$
\item[(R10)]
$f_i({m_1},{m_2},\ldots,{m_{t}})\bffkk_{\la}=0$ for $i\in I$,
 $t\geq 1$, $m_1,\ldots,m_{t}\in\mathbb{Z}\backslash\{0\}$ and $\la_i<t.$
Here $f_i({m_1},{m_2},\ldots,{m_{t}})$ is defined recursively as follows. For $m\in\mbz\backslash\{0\}$, let $f_i(m)=\tte_{i,i+nm}$, and $f_i(0)=\sum_{\lambda\in\Lambda_\vtg(n,r)}\lambda_i\bffkk_\lambda$.
For $t> 1$ and $m_1,m_2,\ldots, m_t\in\mathbb{Z}$, let
\begin{equation}\label{recursive forumula}
f_i({m_1},\ldots,m_{t}) =f_i({m_1},\ldots,{m_{t-1}})f_i(m_{t})-\sum_{j=1}^{t-1}
f_i(m_1,\ldots,\widehat{m_j},\ldots,m_{t-1},m_j+m_{t}),
\end{equation}
where $\widehat{m_j}$ indicates that $m_j$ is omitted.
\end{itemize}
\end{Def}

The definition implies the following result.
\begin{Lem}\label{antilem0}
There is a unique $\mathbb{Q}$-algebra anti-automorphism $\tau$ on $\bfTr$
such that
$$\tau(\bffke_i) = \bffkf_i,~ \tau(\bffkf_i) = \bffke_i,~ \tau(\bffkk_\lambda) =\bffkk_\lambda,~~\tau(\tte_{i,i+mn}) =\tte_{i,i-mn},$$
for $i\in I$ and $m\in\mbz\backslash\{0\}$.
\end{Lem}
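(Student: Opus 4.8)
My plan is to realize $\tau$ directly from the presentation in \ref{T(n,r)}. I would first define an anti-endomorphism $\widetilde\tau$ of the free associative $\mbq$-algebra on the generators by the prescribed values together with $\widetilde\tau(w_1\cdots w_k)=\widetilde\tau(w_k)\cdots\widetilde\tau(w_1)$, and then show that the two-sided ideal generated by the relations (R1)--(R10) is $\widetilde\tau$-stable, so that $\tau$ descends to a well-defined anti-endomorphism of $\bfTr$. Since the prescribed values satisfy $\tau^2=\mathrm{id}$ on every generator, the induced map is an involution, hence bijective, so it is an anti-automorphism; uniqueness is automatic because an (anti-)homomorphism is determined by its values on a generating set. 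Throughout I would use the identity $\tau([a,b])=[\tau(b),\tau(a)]$, valid for any anti-homomorphism.

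Checking that the ideal is stable splits into matching pairs of relations. I expect (R1) and (R3) to be carried to themselves after swapping $\lambda,\mu$, resp.\ $i,j$; (R2) for $\bffke_i$ to go to (R2) for $\bffkf_i$ under $\lambda\mapsto\lambda+\afal_i$; and (R8), (R9) to be exchanged or fixed under $m\mapsto -m$. These are routine. For the Serre relations, applying $\tau$ to (R4) and reindexing the sum by $a\leftrightarrow b$ should produce $(-1)^{1-c_{i,j}}$ times the defining element of (R5), using $c_{i,j}=c_{j,i}$ and $\binom{1-c_{i,j}}{a}=\binom{1-c_{i,j}}{b}$; the scalar is harmless because the relation is set equal to zero, and symmetrically (R5) maps to (R4).

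The brackets (R6)--(R7) require commutator bookkeeping. Using $\tau([a,b])=[\tau(b),\tau(a)]$, one sees that $\tau$ turns a left-nested bracket $[[\cdots[a_1,a_2],\cdots],a_k]$ into the right-nested bracket $[\tau(a_k),[\cdots,[\tau(a_2),\tau(a_1)]\cdots]]$. Applying this, the string $[[\ldots[\tte_i,\tte_{i+1}],\ldots],\tte_n]$ becomes $[\ttf_n,[\ldots,[\ttf_{i+1},\ttf_i]\ldots]]$ and $\ttx_{i,m}$ becomes exactly $\tty_{i,m}$, the term $\tau(\tte_{2,2+(m-1)n})=\tte_{2,2-(m-1)n}$ supplying the required imaginary generator; since $\tau(\tte_{1,1+mn}-\tte_{i,i+mn})=\tte_{1,1-mn}-\tte_{i,i-mn}$, the defining element of (R6) lands precisely on that of (R7), and conversely.

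I expect (R10) to be the main obstacle, because of the recursive definition of $f_i(m_1,\ldots,m_t)$. The crucial observation is that the elements $f_i(m)=\tte_{i,i+mn}$ ($m\neq 0$) and $f_i(0)=\sum_\lambda\lambda_i\bffkk_\lambda$ commute pairwise by (R1) and (R8), so they generate a commutative subalgebra $A_i\subseteq\bfTr$; on $A_i$ the anti-homomorphism $\tau$ restricts to an ordinary algebra homomorphism with $f_i(m)\mapsto f_i(-m)$, whence $\tau(A_i)\subseteq A_i$. Since the recursion \eqref{recursive forumula} refers to the arguments only through the sums $m_j+m_t$, which negate correctly, an induction on $t$ would give $\tau(f_i(m_1,\ldots,m_t))=f_i(-m_1,\ldots,-m_t)$. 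As $f_i(m_1,\ldots,m_t)\in A_i$ commutes with $\bffkk_\lambda$, applying $\tau$ to (R10) and moving $\bffkk_\lambda$ to the right produces $f_i(-m_1,\ldots,-m_t)\bffkk_\lambda=0$ whenever $\lambda_i<t$, which is again an instance of (R10). This establishes stability of the ideal; involutivity and uniqueness then finish the proof.
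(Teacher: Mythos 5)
Your proof is correct and is exactly the argument the paper leaves implicit: the paper offers no proof beyond the remark that the definition implies the result, meaning that the assignment visibly permutes the defining relations, and your verification spells this out. Your pairing of the relations — (R1), (R3) fixed up to swapping indices, (R2) for $\bffke_i$ matched with (R2) for $\bffkf_i$ via $\lambda\mapsto\lambda+\afal_i$, (R4)/(R5) matched up to the harmless sign $(-1)^{1-c_{i,j}}$, (R6)/(R7) and (R8)/(R9) exchanged under $m\mapsto-m$, and (R10) handled by noting that modulo the defining ideal the elements $f_i(l)$ commute with each other and with the $\bffkk_\lambda$, so the anti-homomorphism acts multiplicatively there and the recursion \eqref{recursive forumula} is compatible with negating all arguments — is precisely the routine check the paper's one-line assertion encapsulates, carried out correctly.
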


By \ref{lem0}, we conclude that there is an algebra homomorphism $\xi_r:\sU(\afgl)\rightarrow \bfTr$ such that
\begin{equation}\label{xir}
\xi_r(E_i)=\bffke_i,~~\xi_r(F_i)=\bffkf_i,~\xi_r(H_i)= \sum\limits_{\lambda\in\afLa(n,r)}\lambda_i\bffkk_\lambda~~\text{and}
~~\xi_r(\afE_{i,i+mn})\mapsto \tte_{i,i+mn}
\end{equation}
for $i\in I$, $m\in\mbz$ and $m\not=0$.
According to \cite[(4.1.1)]{Fu09}, we have the following result.
\begin{Lem}\label{surj xir}
For $\la\in\afLanr$ we have $$\xi_r\bigg(\bigg({H\atop\la}\bigg)\bigg)=
\begin{cases}
\ttk_\la&\text{if $\sg(\la)=r$;}
\\
0&\text{otherwise}.
\end{cases}$$
In particular the map $\xi_r$ is surjective.
\end{Lem}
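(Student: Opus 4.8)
The plan is to evaluate $\xi_r$ on $\binom{H}{\la}$ by turning the defining polynomial expression into a functional calculus against the idempotents $\bffkk_\mu$. First I would extract the structural input from (R1): the elements $\{\bffkk_\mu\mid\mu\in\afLanr\}$ are pairwise orthogonal idempotents with $\sum_\mu\bffkk_\mu=1$. Since $\xi_r(H_i)=\sum_{\mu\in\afLanr}\mu_i\bffkk_\mu$ by \eqref{xir}, orthogonality gives $\xi_r(H_i)^s=\sum_\mu\mu_i^s\,\bffkk_\mu$ for every $s\ge 1$, while the unit relation covers $s=0$. Hence for any polynomial $p\in\mbq[x]$ one has $p(\xi_r(H_i))=\sum_\mu p(\mu_i)\bffkk_\mu$, and applying this to $p(x)=\binom{x}{\la_i}$ yields $\xi_r\big(\binom{H_i}{\la_i}\big)=\sum_\mu\binom{\mu_i}{\la_i}\bffkk_\mu$.

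Next I would assemble the product over $i$. Because the $H_i$ commute by (UR1) and $\xi_r$ is an algebra homomorphism, $\xi_r\big(\binom{H}{\la}\big)=\prod_{i=1}^n\xi_r\big(\binom{H_i}{\la_i}\big)$, and a second use of orthogonality collapses the product of sums into a single sum:
$$\xi_r\Big(\binom{H}{\la}\Big)=\sum_{\mu\in\afLanr}\Big(\prod_{i=1}^n\binom{\mu_i}{\la_i}\Big)\bffkk_\mu .$$

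The remaining point is purely combinatorial. For $\la$ with $\sg(\la)=r$ and any $\mu\in\afLanr$, the coefficient $\prod_i\binom{\mu_i}{\la_i}$ is nonzero only when $\mu_i\ge\la_i$ for all $i$; since $\sg(\mu)=\sg(\la)=r$, this forces $\mu=\la$, in which case the coefficient is $1$. Thus the sum collapses to $\bffkk_\la=\ttk_\la$, giving the first branch. Under the stated hypothesis $\la\in\afLanr$ this is the only case that occurs; I would note, however, that the same computation yields the second branch whenever $\sg(\la)>r$, for then $\mu_i\ge\la_i$ for all $i$ is impossible for any $\mu\in\afLanr$, so every coefficient vanishes and the image is $0$.

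Finally, surjectivity is immediate: the image of $\xi_r$ contains $\bffke_i=\xi_r(E_i)$, $\bffkf_i=\xi_r(F_i)$, $\tte_{i,i+mn}=\xi_r(\afE_{i,i+mn})$, and now also $\bffkk_\la=\xi_r\big(\binom{H}{\la}\big)$ for every $\la\in\afLanr$; these are exactly the generators of $\bfTr$ listed in \ref{T(n,r)}, so $\xi_r$ is onto. The only step needing genuine care is the functional-calculus passage from the polynomial $\binom{H_i}{t}$ to $\sum_\mu\binom{\mu_i}{t}\bffkk_\mu$ together with the compatibility of the several products through orthogonality; the combinatorial vanishing and the surjectivity statement are then routine bookkeeping.
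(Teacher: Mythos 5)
Your proof is correct, and it is genuinely more self-contained than what the paper does: the paper offers no argument for this lemma at all, merely asserting it ``according to \cite[(4.1.1)]{Fu09}'', i.e.\ it outsources the computation to a formula established in \cite{Fu09}. Your idempotent functional calculus --- using (R1) to get $p(\xi_r(H_i))=\sum_{\mu\in\afLanr}p(\mu_i)\bffkk_\mu$ for any $p\in\mbq[x]$, then collapsing the product over $i$ by a second use of orthogonality to obtain $\xi_r\bigl(\binom{H}{\la}\bigr)=\sum_{\mu\in\afLanr}\bigl(\prod_{i=1}^n\binom{\mu_i}{\la_i}\bigr)\bffkk_\mu$, and finally the binomial-vanishing argument --- is exactly the kind of computation the citation presupposes, but it has the merit of taking place entirely inside $\bfTr$, using only relation (R1) and the formula \eqref{xir} for $\xi_r(H_i)$; no identity needs to be transported from the Schur algebra setting into the abstractly presented algebra. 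You were also right to be careful about the case split: as literally stated (with hypothesis $\la\in\afLanr$) the ``otherwise'' branch is vacuous, and your computation shows the vanishing holds precisely when $\sg(\la)>r$; for $\sg(\la)<r$ the sum $\sum_{\mu}\prod_i\binom{\mu_i}{\la_i}\bffkk_\mu$ has nonzero coefficients and there is no reason for it to vanish, so the lemma's second branch should indeed be read with hypothesis $\la\in\afmbnn$ and $\sg(\la)>r$. The surjectivity conclusion follows exactly as you say, since every listed generator of $\bfTr$ is exhibited in the image of $\xi_r$.
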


Let \begin{equation*}
\tte_{i,j}:=\xi_r(\afE_{i,j})~i=1,\ldots,n,~j\in\mbz.
\end{equation*}
For $i\in\mbz$, let $\bar i$ denote the integer modulo $n$.
Since $[\afE_{i,j},\afE_{k,l}]=\dt_{\bar j,\bar
k}\afE_{i,l+j-k}-\dt_{\bar l,\bar i}\afE_{k,j+l-i}~\text{for}~i,j,k,l\in\mbz$,
we conclude that
\begin{equation}\label{eq2}
[\tte_{i,j},\tte_{k,l}]=\dt_{\bar j,\bar
k}\tte_{i,l+j-k}-\dt_{\bar l,\bar i}\tte_{k,j+l-i}.
\end{equation}
for $i,j,k,l\in\mbz$.

For $A\in\afThnp$,
let $$\bffke^{A}=\prod_{1\leq i\leq n\atop i<j,\,j\in\mbz}\tte_{i,j}^{a_{i,j}}=M_n\cdots M_2M_1,$$ where
$$
M_j=M_j(A)=
\prod_{1<j+sn\atop s\in\mbz}\tte_{1,j+sn}^{a_{1,j+sn}}
\prod_{2<j+sn\atop s\in\mbz}\tte_{2,j+sn}^{a_{2,j+sn}}\cdots\prod_{n<j+sn\atop s\in\mbz}
{\tte_{n,j+sn}^{a_{n,j+sn}}}.
$$
Note that by \eqref{eq2} we have $\tte_{i,j+mn}\tte_{i,j+ln}
=\tte_{i,j+ln}\tte_{i,j+mn}$ for $i,j\in I$ and $m,l\in\mbz$. So
the product $\prod_{i<j+sn,\,s\in\mbz}\tte_{i,j+sn}^{a_{i,j+sn}}$ is independent of the order of $s$.
For $A\in\afThnm$, let $\bffkf^{A}=\tau(\tte^{\tA})$, where $\tA$ is the transpose of $A$. Then we have
\begin{equation*}\bffkf^{A}=\prod_{1\leq i\leq n\atop i>j}\tte_{i,j}^{a_{i,j}}=M'_1M'_2\cdots M'_n,\end{equation*} where
\begin{align*}
M'_j=M_j'(A)=&\prod_{j+sn>n\atop s\in\mbz}\tte_{j+sn,n}^{a_{j+sn,n}}\prod_{j+sn>n-1\atop s\in\mbz}
\tte_{j+sn,n-1}^{a_{j+sn,n-1}}\cdots
\prod_{j+sn>1\atop s\in\mbz}\tte_{j+sn,1}^{a_{j+sn,1}}.
\end{align*}

For $A\in\afThnpm$ and $i,j\in\mbz$ let
\begin{equation}\label{Aij+}
A_{i,j}^+=\sum_{i<j+sn\atop s\in\mbz}a_{i,j+sn}\afE_{i,j+sn}\text{ and }
A_{j,i}^-=\sum_{i<j+sn\atop s\in\mbz}a_{j+sn,i}\afE_{j+sn,i}.
\end{equation}
Furthermore, let
\begin{equation}\label{Aj+}
A_j^+=\sum_{1\leq i\leq n}A_{i,j}^+=\sum_{i<j\atop i\in\mbz}a_{i,j}
\afE_{i,j}\text{ and }
A_j^-=\sum_{1\leq i\leq n}A_{j,i}^-=\sum_{i<j\atop i\in\mbz}a_{j,i}
\afE_{j,i}.
\end{equation}
Then we have $A^+=\sum_{1\leq j\leq n}A_j^+$,
$A^-=\sum_{1\leq j\leq n}A_j^-$.
Furthermore we have

$$
\tte^{A_{i,j}^+}=\prod_{i<j+sn\atop s\in\mbz}\tte_{i,j+sn}^{a_{i,j+sn}},\,
\ttf^{A_{j,i}^-}=\prod_{i<j+sn\atop s\in\mbz}\tte_{j+sn,i}^{a_{j+sn,i}},\,
\bffke^{A_j^+}= M_j(A^+),\
\bffkf^{A_j^-}= M'_j(A^-).$$
Hence
we have
\begin{equation}\label{main2}
\bffke^{A^+}=\bffke^{A_n^+}\cdots\bffke^{A_1^+},\,
\bffkf^{A^-}=\bffkf^{A_1^-}\cdots\bffkf^{A_n^-},\, \bffke^{A_j^+}=\bffke^{A_{1,j}^+}\cdots\bffke^{A_{n,j}^+},\,
\ttf^{A_j^-}=\ttf^{A_{j,n}^-}\cdots\ttf^{A_{j,1}^-}.
\end{equation}


For $k\in\mbn$ let
\begin{equation}\label{Nk}
\sN_k=\spann_\mbq\{
\tte^{A^+}\ttk_\mu\ttf^{A^-}\mid A\in\afThnpm,\,\sg(A)< k,\,\mu\in\afLanr\}.
\end{equation}
Furthermore, let
\begin{equation}\label{Nkpm}
\sN_k^+=\spann_\mbq\{
\tte^{A}\mid A\in\afThnp,\,\sg(A)< k\}\text{ and }
\sN_k^-=\spann_\mbq\{
\ttf^{A}\mid A\in\afThnm,\,\sg(A)< k\}.
\end{equation}
By \eqref{eq2} we have the following result.
\begin{Lem}\label{MM'}
Assume $M=\tte_{i_1,j_1}\cdots\tte_{i_k,j_k}$ and $M'=\tte_{i_1',j_1'}\cdots\tte_{i_l',j_l'}$, where $i_s\not=j_s$ and
$i_t'\not=j_t'$ for all $s,t$. Then:
\begin{itemize}
\item[(1)]
$MM'-M'M\in\sN_{k+l}.$
\item[(2)]
If $i_s<j_s$ and $i_t'<j_t'$ for all $s,t$ then $MM'-M'M\in\sN_{k+l}^+.$
\item[(3)]
If $i_s>j_s$ and $i_t'>j_t'$ for all $s,t$ then $MM'-M'M\in\sN_{k+l}^-.$
\end{itemize}
\end{Lem}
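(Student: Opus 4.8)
The proof rests on the commutation rule \eqref{eq2}, whose right-hand side is a $\mbq$-combination of at most two single generators $\tte_{\cdot,\cdot}$; here a ``diagonal'' symbol $\tte_{i,i}$ is to be read as $\sum_\lambda\lambda_i\ttk_\lambda\in\sN_1$. Thus every application of \eqref{eq2} trades a product of two generators for an expression involving at most one generator, i.e.\ it lowers the number of generator factors by one. The plan is to first isolate an auxiliary claim and then deduce all three parts from it.

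The auxiliary claim is: for every $m\geq 0$, any product $g_1\cdots g_m$ of $m$ off-diagonal generators (say $g_s=\tte_{a_s,b_s}$ with $a_s\neq b_s$) lies in $\sN_{m+1}$; moreover if all factors are raising ($a_s<b_s$) it lies in $\sN_{m+1}^+$, and if all are lowering ($a_s>b_s$) it lies in $\sN_{m+1}^-$. I would prove this by induction on $m$, the cases $m=0,1$ being immediate from (R1) and the definitions \eqref{Nk}, \eqref{Nkpm}. For $m\geq2$, swapping two adjacent factors via \eqref{eq2} changes the product by $g_1\cdots g_{s-1}[g_s,g_{s+1}]g_{s+2}\cdots g_m$, a product of only $m-1$ generators, which lies in $\sN_m$ by the induction hypothesis. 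Hence \emph{modulo $\sN_m$} the product is symmetric in its factors, so a finite straightening (bubble sort) brings it, modulo $\sN_m$, to the prescribed order ``all raising, then all lowering''. Inserting $1=\sum_\mu\ttk_\mu$ (R1) in the middle and writing $A=B+C$ with $B\in\afThnp$ the raising part and $C\in\afThnm$ the lowering part identifies the result with $\sum_\mu\tte^{A^+}\ttk_\mu\ttf^{A^-}$, which lies in $\sN_{m+1}$ since $\sg(A)=m$. As $\sN_m\subseteq\sN_{m+1}$, the claim follows. For the refinements one checks directly from \eqref{eq2} that the commutator of two raising generators is again a combination of raising generators (and likewise for lowering), so the straightening never leaves the raising (resp.\ lowering) block and no $\ttk$ ever appears.

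To obtain the lemma I would expand the commutator by the Leibniz rule: writing $M=g_1\cdots g_k$, $M'=h_1\cdots h_l$,
\begin{equation*}
MM'-M'M=\sum_{t=1}^{l}\sum_{s=1}^{k}h_1\cdots h_{t-1}\,g_1\cdots g_{s-1}\,[g_s,h_t]\,g_{s+1}\cdots g_k\,h_{t+1}\cdots h_l.
\end{equation*}
By \eqref{eq2} each $[g_s,h_t]$ is a combination of at most one generator, so every term on the right is, after moving any $\ttk$-factors to the middle via (R2) and (R8) (which preserves the generator count), a product of at most $k+l-1$ off-diagonal generators. The auxiliary claim with $m=k+l-1$ then places each term in $\sN_{k+l}$, giving (1). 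For (2) and (3) the hypotheses force all $g_s,h_t$ to be raising (resp.\ lowering); by the sharpened form of \eqref{eq2} each $[g_s,h_t]$ is then raising (resp.\ lowering), so every term is a product of $\leq k+l-1$ raising (resp.\ lowering) generators, and the refined claim places it in $\sN_{k+l}^+$ (resp.\ $\sN_{k+l}^-$).

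The main obstacle I anticipate is the auxiliary claim, and within it the well-definedness of the straightening step: one must be sure that reordering the factors into the fixed total order underlying $\tte^{A^+}$ and $\ttf^{A^-}$ genuinely produces only lower-degree terms (all swept into $\sN_m$ by the induction hypothesis), and that the bookkeeping of $\ttk$-factors, arising both from diagonal commutators and from commuting $\ttk_\mu$ past generators via (R2) and (R8), never inflates the generator count. The elementary index computation verifying that raising--raising commutators remain raising (i.e.\ that $a<b$ is preserved in each surviving term of \eqref{eq2}) is what makes the sharper statements (2) and (3) go through.
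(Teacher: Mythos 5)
Your proposal is correct, and it follows the route the paper itself intends: the paper offers no details for this lemma, asserting it as an immediate consequence of the commutation rule \eqref{eq2}, and your Leibniz expansion of $MM'-M'M$ combined with the straightening induction (with the $\ttk_\lambda$-bookkeeping via (R1), (R2), (R8), i.e.\ Lemma \ref{lem4}, and the index check that raising--raising and lowering--lowering commutators stay raising resp.\ lowering) is exactly the natural filling-in of that assertion. No gaps: your auxiliary claim, the absorption of diagonal terms $\tte_{i,i}=\sum_\lambda\lambda_i\ttk_\lambda$ into $\sN_1$, and the verification that $l+j-k>j>i$ (resp.\ $<j<i$) in \eqref{eq2} are precisely what is needed.
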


\begin{Lem}\label{lem4}
For $\la\in\afmbzn$ and $1\leq i\leq n$ and $j\in\mbz$,
we have  $\tte_{i,j}\ttk_\la=\ttk_{\la+\afbse_i-\afbse_j}\tte_{i,j}$, where $\ttk_\la=0$ for $\la\not\in\afLa(n,r)$.
\end{Lem}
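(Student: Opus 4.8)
The plan is to reduce the general statement to the known defining relation (R2), which already handles the case where the index $j$ lies in the fundamental range $\{1,\dots,n\}$ adjacent to $i$, and then to bootstrap to arbitrary $j\in\mbz$ using the commutator formula \eqref{eq2}. First I would record the base case: when $j=i+1$ or $j=i-1$ (equivalently $\tte_{i,j}\in\{\bffke_i,\bffkf_i\}$ up to the identification $\tte_{i,i+1}=\xi_r(\afE_{i,i+1})=\bffke_i$ and $\tte_{i+1,i}=\bffkf_i$), relation (R2) gives exactly $\bffke_i\bffkk_\la=\bffkk_{\la+\afal_i}\bffke_i$ with $\afal_i=\afbse_i-\afbse_{i+1}$, which is the asserted formula since $\afbse_{i}-\afbse_{j}=\afbse_i-\afbse_{i+1}=\afal_i$ in that case. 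Observe also that the formula is consistent with (R8), the case $j=i+mn$, where $\afbse_i-\afbse_j=\afbse_i-\afbse_{i+\overline{mn}}=0$ as elements of $\afmbzn$ and $\tte_{i,i+mn}$ commutes with every $\bffkk_\la$.

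Next I would set up the induction. The key identity is \eqref{eq2}: $[\tte_{i,j},\tte_{k,l}]=\dt_{\bar j,\bar k}\tte_{i,l+j-k}-\dt_{\bar l,\bar i}\tte_{k,j+l-i}$, which lets me express any $\tte_{i,j}$ with $|j-i|>1$ as a commutator of two factors each strictly closer to the diagonal. Concretely, writing $\vep=-1$ if $i<j$ and $\vep=1$ if $i>j$ (as in the proof of \ref{generalization1 of MFforSBE}), I would use $\tte_{i,j}=[\tte_{i,j+\vep},\tte_{j+\vep,j}]$ (choosing the factorization so that both $\tte_{i,j+\vep}$ and $\tte_{j+\vep,j}$ satisfy the inductive hypothesis for the $\bffkk_\la$-commutation relation). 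Assuming the formula for the two shorter factors, I would compute
\begin{align*}
\tte_{i,j}\bffkk_\la
&=\tte_{i,j+\vep}\tte_{j+\vep,j}\bffkk_\la-\tte_{j+\vep,j}\tte_{i,j+\vep}\bffkk_\la\\
&=\tte_{i,j+\vep}\bffkk_{\la+\afbse_{j+\vep}-\afbse_j}\tte_{j+\vep,j}
-\tte_{j+\vep,j}\bffkk_{\la+\afbse_i-\afbse_{j+\vep}}\tte_{i,j+\vep}\\
&=\bffkk_{\la+\afbse_i-\afbse_j}\tte_{i,j+\vep}\tte_{j+\vep,j}
-\bffkk_{\la+\afbse_i-\afbse_j}\tte_{j+\vep,j}\tte_{i,j+\vep}
=\bffkk_{\la+\afbse_i-\afbse_j}\tte_{i,j},
\end{align*}
where the third equality applies the inductive hypothesis a second time to move $\bffkk$ past the remaining factor, using that the total weight shift telescopes to $\afbse_i-\afbse_j$ in both terms. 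This is exactly the claimed relation.

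The point I would check most carefully — and the one I expect to be the main technical obstacle — is the bookkeeping of the weight shifts: one must verify that the index $j+\vep$ chosen in the factorization keeps both factors strictly closer to the diagonal (so the induction on $|j-i|$ genuinely decreases), and that the two successive weight translations compose additively to $\afbse_i-\afbse_j$ in $\afmbzn$ regardless of whether the intermediate index $j+\vep$ falls inside or outside the fundamental residue block. Here I would rely on the convention $\ttk_\la=0$ for $\la\notin\afLa(n,r)$ stated in the lemma, which guarantees the identity remains valid even when an intermediate weight leaves $\afLanr$, since both sides then vanish by (R2) applied to the boundary. Once the weight arithmetic is confirmed, the induction closes and the lemma follows for all $1\le i\le n$ and $j\in\mbz$.
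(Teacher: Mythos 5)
Your proof is correct and follows essentially the same route as the paper: both express $\tte_{i,j}$ via the commutator identity \eqref{eq2} in terms of the generators $\bffke_l$, $\bffkf_l$ and the elements $\tte_{k,k+mn}$, then move $\ttk_\la$ across using (R2) and (R8), with the weight shifts telescoping to $\afbse_i-\afbse_j$. The only difference is organizational: the paper writes $j=k+mn$ with $1\le k\le n$ and uses a single explicit nested commutator ending in $\tte_{k,k+mn}$, whereas you peel off one simple factor at a time by induction on $|j-i|$, handling the residue-coincidence case $\bar j=\bar i$ separately via (R8), exactly where your factorization $[\tte_{i,j+\vep},\tte_{j+\vep,j}]=\tte_{i,j}$ would otherwise pick up an extra term.
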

\begin{proof}
We write $j=k+mn$ with $1\leq k\leq n$ and $m\in\mbz$. If $k=i$ then the assertion follows from \ref{T(n,r)}(R8). Now we assume $k\not=i$.
From \eqref{eq2} we see that $$\tte_{i,j}=\tte_{i,k+mn}=
\begin{cases}
[[\cdots[[\tte_{i,i+1},\tte_{i+1,i+2}],
\tte_{i+2,i+3}],\cdots,\tte_{k-1,k}],\tte_{k,k+mn}]&\text{if $i<k$;}
\\
[[\cdots[[\tte_{i,i-1},\tte_{i-1,i-2}],
\tte_{i-2,i-3}],\cdots,\tte_{k+1,k}],\tte_{k,k+mn}]&\text{if $i>k$}
\end{cases}$$
By \ref{T(n,r)}(R2) and (R8) we conclude that
$$\tte_{i,k+mn}\ttk_\la=
\begin{cases}
\ttk_{\la+\afal_i+\afal_{i+1}+\cdots+\afal_{k-1}}\tte_{i,k+mn}
=\ttk_{\la+\afbse_i-\afbse_k}\tte_{i,k+mn}&\text{if $i<k$;}
\\
\ttk_{\la-\afal_{i-1}-\afal_{i-2}-\cdots-\afal_{k}}\tte_{i,k+mn}
=\ttk_{\la+\afbse_i-\afbse_k}\tte_{i,k+mn}&\text{if $i>k$.}
\end{cases}$$
The assertion follows.
\end{proof}

\begin{Coro}\label{lem3}
Let $\la\in\afmbzn$.
\begin{itemize}
\item[(1)]
For $A\in\afThnp$ we have
$\bffke^{A}\bffkk_\lambda=\bffkk_{\la-\co(A)+\ro(A)}\bffke^{A}$.
\item[(2)]
For $A\in\afThnm$ we have
$\bffkk_\lambda\ttf^{A}=\ttf^{A}\bffkk_{\la+\co(A)-\ro(A)}.$
\end{itemize}
\end{Coro}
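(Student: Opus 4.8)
The plan is to establish (1) by commuting $\ttk_\la$ leftward through the product $\bffke^A$ one $\tte$-factor at a time via Lemma~\ref{lem4}, and then to deduce (2) from (1) using the anti-automorphism $\tau$ of Lemma~\ref{antilem0}.

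For (1), recall that for $A\in\afThnp$ one has $\bffke^A=\prod_{1\leq i\leq n,\,i<j}\tte_{i,j}^{a_{i,j}}$, a finite product of $\sg(A)$ factors. Lemma~\ref{lem4} gives $\tte_{i,j}\ttk_\la=\ttk_{\la+\afbse_i-\afbse_j}\tte_{i,j}$ for each factor, with $\afbse_j$ read modulo $n$. Since each application only relabels the subscript of $\ttk$ and leaves the $\tte$-factors untouched, the order of the factors is irrelevant, and commuting $\ttk_\la$ past all of them yields $\bffke^A\ttk_\la=\ttk_{\la+\Delta}\bffke^A$ with
\begin{equation*}
\Delta=\sum_{1\leq i\leq n,\,j>i}a_{i,j}(\afbse_i-\afbse_j).
\end{equation*}
It then remains to show $\Delta=\ro(A)-\co(A)$.

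This identification is the heart of the argument, and the one place where the affine (periodic) indexing needs care. For the $\afbse_i$-part, grouping by $i$ and using $a_{i,j}=0$ for $i\geq j$ gives $\sum_{1\leq i\leq n}\big(\sum_{j>i}a_{i,j}\big)\afbse_i=\sum_{1\leq i\leq n}\ro(A)_i\,\afbse_i=\ro(A)$, since $\afbse_1,\dots,\afbse_n$ form a basis of the periodic sequences in $\afmbzn$. For the $\afbse_j$-part, I group by the residue $\bar j=k\in\{1,\dots,n\}$, write $j=k+sn$, and use periodicity $a_{i,k+sn}=a_{i-sn,k}$; substituting $i'=i-sn$ turns the constraint $j>i$ into $i'<k$, and $(i,s)\mapsto i'$ is a bijection onto $\{i'\in\mbz\mid i'<k\}$. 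Hence the coefficient of $\afbse_k$ equals $\sum_{i'<k}a_{i',k}=\co(A)_k$, so that $\sum_{1\leq i\leq n,\,j>i}a_{i,j}\afbse_j=\co(A)$. Therefore $\Delta=\ro(A)-\co(A)$, proving (1).

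For (2), I apply $\tau$ to the identity of (1). By the definition of $\bffkf^{\,\cdot}$ one has $\tau(\bffke^A)=\bffkf^{\,\tA}$ with $\tA\in\afThnm$, and $\tau$ fixes every $\ttk_\mu$ while reversing products; applying it to $\bffke^A\ttk_\la=\ttk_{\la-\co(A)+\ro(A)}\bffke^A$ gives $\ttk_\la\bffkf^{\,\tA}=\bffkf^{\,\tA}\ttk_{\la-\co(A)+\ro(A)}$. Writing $B=\tA$ and using $\ro(B)=\co(A)$, $\co(B)=\ro(A)$ rewrites the shift as $\la+\co(B)-\ro(B)$, which is exactly (2). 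The main obstacle is the reindexing in the previous paragraph; the rest is formal.
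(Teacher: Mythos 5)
Your proof is correct and follows essentially the same route as the paper's: commuting $\ttk_\la$ through $\bffke^A$ factor by factor via Lemma \ref{lem4}, identifying the accumulated shift with $\ro(A)-\co(A)$ by exactly the same residue-class reindexing $j=k+sn$, and deducing (2) from (1) with the anti-automorphism $\tau$. The only difference is that you spell out the periodicity substitution $i'=i-sn$ in detail where the paper compresses it into one displayed equation.
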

\begin{proof}
Applying $\tau$ defined in \ref{antilem0} to (1) gives
(2).  We prove (1). If $A\in\afThnp$, then by \ref{lem4} we have
$\bffke^{A}\bffkk_\lambda=\bffkk_{\la+\bt}\bffke^{A},$
where
$$\bt=\sum_{1\leq i\leq n\atop i<j,\,j\in\mbz}a_{i,j}(\afbse_i-\afbse_j)
=\sum_{1\leq i\leq n}\sum_{i<j\atop j\in\mbz}a_{i,j}\afbse_i-
\sum_{1\leq k\leq n}\sum_{1\leq i\leq n,\,s\in\mbz\atop i<k+sn}a_{i,k+sn}\afbse_{k}=\ro(A)-\co(A).$$
The assertion follows.
\end{proof}

For $1\leq i\leq n$, let $$\afThnpm_i=\{A\in\afThnpm\mid a_{h,t}=0\,\text{for}\,1\leq h\leq n,\,t\in\mbz\,\text{with}\,
(h,t)\not\in\{(i,i+kn)\mid k\in\mbz\}\}.$$
For $1\leq i\leq n$, let $\afThnp_i=\afThnp\cap\afThnpm_i$ and $\afThnm_i=\afThnm\cap\afThnpm_i$.
By definition
we have
\begin{equation}\label{spectial elements}
\tte^{A^+}=\prod_{m\geq 1}\tte_{i,i+mn}^{a_{i,i+mn}}=
\prod_{m\geq 1}f_i(m)^{a_{i,i+mn}}\text{ and }
\ttf^{A^-}=\prod_{m\geq 1}\tte_{i+mn,i}^{a_{i+mn,i}}
=\prod_{m\geq 1}f_i(-m)^{a_{i+mn,i}}
\end{equation}
for $A\in\afThnpm_i$. By \ref{T(n,r)}(R8) we have
\begin{equation}\label{commute}
\tte^{A^+}\ttk_\la=\ttk_\la\tte^{A^+},\
\ttf^{A^-}\ttk_\la=\ttk_\la\ttf^{A^-} \text{ and }
\tte^{A^+}\ttf^{A^-}=\ttf^{A^-}\tte^{A^+}
\end{equation}
for $A\in\afThnpm_i$ and $\la\in\afLanr$.

\begin{Lem}\label{fi(m1) cdots fi(mt)}
For $i\in I$ and $m_1,\cdots,m_t\in\mbz$ we have
$$f_i(m_1)\cdots f_i(m_t)=f_i(m_1,\cdots,m_t)+g$$
where $g$ is a $\mbz$-linear combination of $f_i(l_1)\cdots f_i(l_s)$
with $1\leq s\leq t-1$ and $l_1,\cdots,l_s\in\mbz$.
\end{Lem}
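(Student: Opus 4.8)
The plan is to prove the identity by induction on $t$, since the statement is essentially the inversion of the defining recursion \eqref{recursive forumula}. It is convenient to call the integer $s$ the \emph{length} of a monomial $f_i(l_1)\cdots f_i(l_s)$; the claim is then that $f_i(m_1)\cdots f_i(m_t)-f_i(m_1,\ldots,m_t)$ is a $\mbz$-linear combination of such monomials of length strictly less than $t$. The case $t=1$ is trivial, with $g=0$.

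For the inductive step I would split off the last factor and write $f_i(m_1)\cdots f_i(m_t)=\bigl(f_i(m_1)\cdots f_i(m_{t-1})\bigr)f_i(m_t)$. Applying the induction hypothesis to the first $t-1$ factors gives $f_i(m_1)\cdots f_i(m_{t-1})=f_i(m_1,\ldots,m_{t-1})+g_1$, where $g_1$ is a $\mbz$-combination of monomials of length at most $t-2$; multiplying on the right by $f_i(m_t)$, the term $g_1 f_i(m_t)$ is then a combination of monomials of length at most $t-1$ and may be absorbed into $g$. It remains to analyse $f_i(m_1,\ldots,m_{t-1})f_i(m_t)$, which by \eqref{recursive forumula} equals
\[
f_i(m_1,\ldots,m_t)+\sum_{j=1}^{t-1}f_i(m_1,\ldots,\widehat{m_j},\ldots,m_{t-1},m_j+m_t).
\]

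Each summand in this correction term is again a bracketed symbol, now in $t-1$ arguments, and here I would invoke the induction hypothesis a second time, read in the reverse direction: since $f_i(l_1)\cdots f_i(l_{t-1})=f_i(l_1,\ldots,l_{t-1})+(\text{monomials of length}\le t-2)$, one may solve for $f_i(l_1,\ldots,l_{t-1})$ and conclude that every symbol in $t-1$ arguments is a $\mbz$-combination of monomials of length at most $t-1$. Substituting these expressions, all terms on the right except $f_i(m_1,\ldots,m_t)$ become monomials of length at most $t-1$, which completes the induction. The only real subtlety is the bookkeeping: the correction terms produced by the recursion are themselves multi-argument symbols rather than genuine products, so one must apply the inductive hypothesis twice — once forward to the leading $t-1$ factors and once backward to rewrite these symbols — but both invocations are at the strictly smaller value $t-1$, so the induction is legitimate.
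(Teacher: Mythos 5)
Your proof is correct and is essentially the paper's own argument: both proceed by induction on $t$, use the defining recursion \eqref{recursive forumula} to write $f_i(m_1,\ldots,m_{t-1})f_i(m_t)$ as $f_i(m_1,\ldots,m_t)$ plus the correction symbols, and invoke the induction hypothesis twice — once on the leading $t-1$ factors and once to convert each $(t-1)$-argument symbol back into monomials of length at most $t-1$. The paper merely packages this bookkeeping in the spaces $\sX_{i,t}=\spann_{\mbz}\{f_i(l_1)\cdots f_i(l_s)\mid 1\leq s\leq t\}$, which is exactly your notion of length.
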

\begin{proof}
Let $\sX_{i,t}=\spann_{\mbz}\{f_i(l_1)\cdots f_i(l_s)\mid 1\leq s\leq t,\,l_1,\cdots,l_s\in\mbz\}$.
We proceed by induction on $t$. The case $t=1,2$ is trivial. Now we assume $t>2$.
By definition we have
$f_i({m_1},\ldots,m_{t}) =f_i({m_1},\ldots,{m_{t-1}})f_i(m_{t})-\sum_{j=1}^{t-1}
h_j,$ where $h_j=f_i(m_1,\ldots,\widehat{m_j},\ldots,m_{t-1},m_j+m_{t}).$ By the induction hypothesis, we have $f_i({m_1},\ldots,{m_{t-1}})f_i(m_{t})-f_i(m_1)\cdots f_i(m_{t-1})f_i(m_{t})\in\sX_{i,t-2}f_i(m_{t})\han \sX_{i,t-1}$ and $h_j\in\sX_{i,t-1}$ for all $j$. The assertion follows.
\end{proof}

\begin{Lem}\label{lem1}
Let $\la\in\Lambda_\vtg(n,r)$.  If $A\in\afThnpm_i$ and $\la_i<\sigma_i(A)$ for some $1\leq i\leq n$, then
\begin{equation}\label{cor11}
\bffke^{A^+}\bffkk_\lambda\bffkf^{A^-}=
\sum_{{B\in\afThnpm_i\atop \sigma(B)<\sigma(A)}}d_{B}\bffke^{B^+}\bffkk_\lambda\bffkf^{B^-},
\end{equation}
where $d_{B}\in\mathbb{Q}$.
\end{Lem}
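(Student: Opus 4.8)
The plan is to move every $\bffkk_\la$ to the right, collapse the product into a single length-$N$ symbol $f_i(m_1,\dots,m_N)$, annihilate it with \ref{T(n,r)}(R10), and then re-expand the lower-length remainder. First I would use \eqref{commute}: since $A\in\afThnpm_i$, the factors $\bffke^{A^+}$, $\bffkk_\la$ and $\bffkf^{A^-}$ commute, so $\bffke^{A^+}\bffkk_\la\bffkf^{A^-}=\bffke^{A^+}\bffkf^{A^-}\bffkk_\la$. By \eqref{spectial elements} the element $\bffke^{A^+}\bffkf^{A^-}$ is a product of commuting factors $f_i(m)=\tte_{i,i+mn}$ (each $m\geq 1$ occurring $a_{i,i+mn}$ times) and $f_i(-m)=\tte_{i+mn,i}$ (each $m\geq 1$ occurring $a_{i+mn,i}$ times); its number of factors is $N:=\sum_{m\geq 1}(a_{i,i+mn}+a_{i+mn,i})$, and every argument is nonzero. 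A short computation with the periodicity $a_{i,i-mn}=a_{i+mn,i}$ shows that for $A\in\afThnpm_i$ one has $N=\sigma_i(A)=\sg(A)$ (the only nonzero fundamental row of $A$ is the $i$-th), and in particular $N\geq 1$ since $\la_i\geq 0$ and $\la_i<\sigma_i(A)$.

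Next I would apply Lemma \ref{fi(m1) cdots fi(mt)} to this length-$N$ product, writing it as $f_i(m_1,\dots,m_N)+g$, where $(m_1,\dots,m_N)$ lists the (nonzero) arguments and $g$ is a $\mbz$-linear combination of shorter products $f_i(l_1)\cdots f_i(l_s)$ with $1\leq s\leq N-1$ and $l_j\in\mbz$. Multiplying on the right by $\bffkk_\la$ and invoking \ref{T(n,r)}(R10)---all $m_j\neq 0$ and $\la_i<\sigma_i(A)=N$---kills the leading term, giving $\bffke^{A^+}\bffkf^{A^-}\bffkk_\la=g\,\bffkk_\la$. Thus the problem reduces to rewriting each $f_i(l_1)\cdots f_i(l_s)\bffkk_\la$ with $s<N$ in the required shape.

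For the final step I would separate, in each shorter product, the factors with $l_j=0$ from those with $l_j\neq 0$. All the $f_i(l_j)$ commute with one another and with $\bffkk_\la$ (by \ref{T(n,r)}(R8)), and the factor $f_i(0)=\sum_{\mu}\mu_i\bffkk_\mu$ satisfies $f_i(0)\bffkk_\la=\la_i\bffkk_\la$ by \ref{T(n,r)}(R1); hence a product with $q$ vanishing arguments contributes a scalar $\la_i^{q}$ times $\big(\prod_{l_j\neq 0}f_i(l_j)\big)\bffkk_\la$. Collecting the positive and negative nonzero arguments, the latter product equals $\bffke^{B^+}\bffkf^{B^-}$ for the matrix $B\in\afThnpm_i$ with $b_{i,i+mn}=\#\{j:l_j=m\}$ $(m>0)$ and $b_{i+mn,i}=\#\{j:l_j=-m\}$ $(m>0)$, for which $\sg(B)=s-q<N=\sg(A)$. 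A final application of \eqref{commute} returns $\bffkk_\la$ to the middle, yielding $\bffke^{B^+}\bffkk_\la\bffkf^{B^-}$ with $\sg(B)<\sg(A)$; summing over the terms of $g$ and absorbing the scalars into the $d_B$ then produces \eqref{cor11}.

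The only point that needs genuine care---and the one real obstacle---is the bookkeeping of the $f_i(0)$ factors arising from $g$: one must confirm that deleting them strictly lowers $\sg$, so that every surviving $B$ satisfies $\sg(B)<\sg(A)$. This holds because the single leading symbol $f_i(m_1,\dots,m_N)$ already exhausts all $N$ factors, so every term of $g$ has at most $N-1$ factors and hence at most $N-1$ nonzero arguments. Everything else is a direct combination of Lemma \ref{fi(m1) cdots fi(mt)}, relation \ref{T(n,r)}(R10), and the commutation relations \eqref{commute}.
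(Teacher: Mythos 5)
Your proof is correct and takes essentially the same route as the paper's own argument: commute $\bffkk_\la$ out of the middle, write $\bffke^{A^+}\bffkf^{A^-}$ as a product of $t=\sg(A)=\sg_i(A)$ factors $f_i(m_j)$ with all $m_j\neq 0$, apply Lemma \ref{fi(m1) cdots fi(mt)}, kill the leading term $f_i(m_1,\ldots,m_t)\bffkk_\la$ by (R10), and rewrite each shorter product as $\tte^{B^+}\ttf^{B^-}f_i(0)^k$ with $B\in\afThnpm_i$, $\sg(B)<\sg(A)$, evaluating $f_i(0)\bffkk_\la=\la_i\bffkk_\la$. The only difference is that you make explicit the bookkeeping (the identity $N=\sigma_i(A)=\sg(A)$ via periodicity, and the extraction of the $f_i(0)$ factors) that the paper leaves implicit.
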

\begin{proof}
Since $A\in\afThnpm_i$, there exist $m_1,m_2,\ldots,m_t\in\mbz\backslash\{0\}$ with $t=\sigma(A)=\sg_i(A)$ such that $\bffke^{A^+}\bffkf^{A^-}=f_i(m_1)\cdots f_i(m_t).$ It follows from \ref{fi(m1) cdots fi(mt)} that
$$\bffke^{A^+}\bffkf^{A^-}=f_i(m_1,\cdots,m_t)+g$$
where $g$ is a $\mbq$-linear combination of $f_i(l_1)\cdots f_i(l_s)$
with $1\leq s<t=\sg(A)$ and $l_1,\cdots,l_s\in\mbz$. Given $l_1,\cdots,l_s\in\mbz$ there exists $B\in\afThnpm_i$ and $k\in\mbn$ such that $f_i(l_1)\cdots f_i(l_s)=\tte^{B^+}\ttf^{B^-}f_i(0)^{k}$ and $s=\sg(B)+k$.
It follows that
\begin{equation}\label{eq for lem1}
\bffke^{A^+}\bffkf^{A^-}= f_i(m_1,\cdots,m_t)+\sum_{B\in\afThnpm_i\atop\sg(B)<\sg(A),\,k\in\mbn}
g_{B,k}\tte^{B^+}\ttf^{B^-}f_i(0)^{k}\quad(g_{B,k}\in\mbq)
\end{equation}
Since $\la_i<\sg_i(A)=t$, by \ref{T(n,r)}(R10) we have $f_i(m_1,\cdots,m_t)\ttk_\la=0$. Thus by \eqref{commute} and \eqref{eq for lem1}
we have
$$\bffke^{A^+}\ttk_\la\bffkf^{A^-}= \bffke^{A^+}\bffkf^{A^-}\ttk_\la= \sum_{B\in\afThnpm_i\atop\sg(B)<\sg(A),\,k\in\mbn}
g_{B,k}\la_i^{k}\tte^{B^+}\ttk_\la\ttf^{B^-}.$$
The assertion follows.
\end{proof}

Recall the notation $A_{i,j}^+$, $A_j^+$ defined in \eqref{Aij+} and \eqref{Aj+}.
For $1\leq i\leq n$ let
$$\Ga_i=\{A\in\afThnpm\mid A^+=A_i^+,\,A^-\in\afThnm_i\}\text{ and }
\Ga_i'=\{A\in\afThnpm\mid A^-=A_i^-,\,A^+\in\afThnp_i\}.$$
\begin{Coro}\label{cor1}
Let $\lambda\in\Lambda_\vtg(n,r)$ and $A\in\afThnpm$. If $A\in\Ga_i$ and $\la_i<\sigma_i(A)$ for some $1\leq i\leq n$, then
\begin{equation}\label{cor11}
\bffkf^{A^-}\bffke^{A^+}\bffkk_\lambda=
\sum_{C\in\Ga_i \atop
\sigma(C)<\sigma(A)}
g_{C}\bffkf^{C^-}\bffke^{C^+}\bffkk_\lambda\ (g_C\in\mbq).
\end{equation}
If $A\in\Ga_i'$ and $\la_i<\sigma_i(A)$, then
\begin{equation}\label{cor12}
\bffkk_\lambda\bffkf^{A^-}\bffke^{A^+}=
\sum_{C\in\Ga_i'\atop\sigma(C)<\sigma(A)}g_{C}'
\bffkk_\lambda\bffkf^{C^-}\bffke^{C^+}
\ (g_C'\in\mbq).
\end{equation}
\end{Coro}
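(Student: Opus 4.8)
The plan is to prove \eqref{cor11} directly, reducing it to the pure strand case \ref{lem1}, and then to deduce \eqref{cor12} by applying the anti-automorphism $\tau$ of \ref{antilem0}: it interchanges $\bffke\leftrightarrow\bffkf$, fixes each $\ttk_\la$, reverses products, and carries $\Ga_i$ onto $\Ga_i'$ while preserving $\sg_i$, so that $\tau$ turns \eqref{cor11} into \eqref{cor12} term by term. Fix $A\in\Ga_i$ with $\la_i<\sg_i(A)$. Since $A^+=A_i^+$ is supported in the columns congruent to $i$ modulo $n$, I would first split it as $A^+=P+Q$, where $P=A_{i,i}^+$ is the strand part (rows and columns $\equiv i$) and $Q=A^+-P$ collects the rows $h\not\equiv i$. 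Directly from the definition of $\sg_i$ one checks that $\sg_i(A)=\sg(A)=\sg(P)+\sg(Q)+\sg(A^-)$. The two strand-$i$ families $\bffke^P=\prod_{m\ge 1}f_i(m)^{a_{i,i+mn}}$ and $\bffkf^{A^-}=\prod_{m\ge 1}f_i(-m)^{a_{i+mn,i}}$ commute with one another and with every $\ttk_\mu$ by \ref{T(n,r)}(R8) and \eqref{commute}, whereas moving $\ttk_\mu$ past a non-strand factor $\tte_{h,i}$ with $h\not\equiv i$ lowers its $i$-th coordinate by one, by \ref{lem4}.

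The first real step is a bracket computation: using \eqref{eq2} I would verify that the span of the column-$\equiv i$ upper triangular elements is closed under commutators and that each such commutator has strictly smaller total degree. Consequently, extracting the factor $\bffke^P$ from $\bffke^{A^+}=\bffke^{A_{1,i}^+}\cdots\bffke^{A_{n,i}^+}$ (see \eqref{main2}) and sliding it to the left so that it sits next to $\bffkf^{A^-}$ produces only correction terms of the shape $\bffkf^{A^-}\bffke^{(A')^+}\ttk_\la$ with $A'\in\Ga_i$ and $\sg(A')<\sg(A)$, which are already of the form permitted on the right-hand side of \eqref{cor11}. Modulo such terms, the problem reduces to rewriting $\bffkf^{A^-}\bffke^P\bffke^Q\ttk_\la$.

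Next comes the weight bookkeeping together with \ref{T(n,r)}(R10). Pushing $\ttk_\la$ to the left of the non-strand block $\bffke^Q$ by \ref{lem3}(1) replaces it by $\ttk_{\la^\flat}$ with $\la^\flat_i=\la_i-\sg(Q)$, since $\co(Q)_i=\sg(Q)$ and $\ro(Q)_i=0$; the strand factors $\bffke^P$ and $\bffkf^{A^-}$ contribute nothing to the $i$-weight. Writing $S=\bffkf^{A^-}\bffke^P=f_i(m_1)\cdots f_i(m_{t'})$ with $t'=\sg(A^-)+\sg(P)$, \ref{fi(m1) cdots fi(mt)} gives $S=f_i(m_1,\dots,m_{t'})+g$, where $g$ is a $\mbq$-combination of products of fewer than $t'$ of the $f_i(\cdot)$. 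The hypothesis $\la_i<\sg(A)=t'+\sg(Q)$ yields $\la^\flat_i<t'$, so \ref{T(n,r)}(R10) forces $f_i(m_1,\dots,m_{t'})\ttk_{\la^\flat}=0$, whence $S\ttk_{\la^\flat}=g\,\ttk_{\la^\flat}$.

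Finally I would reassemble, exactly as in the proof of \ref{lem1}: each summand of $g$ is written as $\tte^{B^+}\ttf^{B^-}f_i(0)^k$ with $B\in\afThnpm_i$ a strand matrix, $\sg(B)<t'$, and $f_i(0)\ttk_{\la^\flat}=\la^\flat_i\,\ttk_{\la^\flat}$. Moving $\ttk$ back to the right through $\bffke^Q$, commuting $\ttf^{B^-}$ to the far left, and straightening the column-$\equiv i$ product $\tte^{B^+}\bffke^Q$ into standard form $\bffke^{C^+}$ (where again only column-$\equiv i$, degree-lowering corrections arise) produces terms $\bffkf^{C^-}\bffke^{C^+}\ttk_\la$ with $C^-=B^-$ and $C^+=B^++Q$, so that $C\in\Ga_i$ and $\sg(C)=\sg(B)+\sg(Q)<t'+\sg(Q)=\sg(A)$. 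Collecting all contributions gives \eqref{cor11}. I expect the main obstacle to be purely organizational: one must keep every correction term inside $\Ga_i$ with strictly smaller $\sg$, which is precisely what the closure of the column-$\equiv i$ and strand-$i$ brackets under \eqref{eq2} guarantees, and one must confirm that sliding $\ttk_\la$ across only the non-strand block lowers the $i$-weight by exactly $\sg(Q)$, so that \ref{T(n,r)}(R10) fires at the correct threshold $t'$.
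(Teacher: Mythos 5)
Your proposal is correct and follows essentially the same route as the paper's proof: the same reduction of \eqref{cor12} to \eqref{cor11} via $\tau$, the same splitting $A^+=A_{i,i}^++Q$ with the strand factor slid next to $\bffkf^{A^-}$ modulo column-class-$i$ corrections of lower degree, the same weight bookkeeping $\la'_i=\la_i-\sg(Q)$ via \ref{lem3}, and the same strand computation (you inline the argument of \ref{lem1} through \ref{fi(m1) cdots fi(mt)} and (R10), where the paper simply cites \ref{lem1}), followed by the identical reassembly and degree count.
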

\begin{proof}
Applying $\tau$ defined in \ref{antilem0} to \eqref{cor11} gives
\eqref{cor12}.
We prove \eqref{cor11}. Let $$\Pi_{i,A}=\spann_\mbq\{\ttf^{C^-}\tte^{C^+}
\mid C\in\Ga_i,\,\sg(C)<\sg(A)\}.$$
We have to show that $\bffkf^{A^-}\bffke^{A^+}\bffkk_\lambda\in\Pi_{i,A}\ttk_\la$.
By \eqref{eq2} and \eqref{main2} we have
\begin{equation*}\label{eq1 for cor11}
\tte^{A^+}=\tte^{A_i^+}=\tte^{A_{i,i}^+}
\tte^{A_{1,i}^+}\cdots \tte^{A_{i-1,i}^+}\tte^{A_{i+1,i}^+}
\cdots \tte^{A_{n,i}^+}+g
\end{equation*}
where $g$ is a $\mbq$-linear combination of $\tte^B$ with $B=B_i^+\in\afThnp$ and
$\sg(B)<\sg(A^+)$.
It follows that
$\bffkf^{A^-}\bffke^{A^+}\bffkk_\lambda
=\bffkf^{A^-}\bffke^{A_{i,i}^+}\bffke^{A_{1,i}^+}
\cdots\bffke^{A_{i-1,i}^+}\bffke^{A_{i+1,i}^+}\ldots\bffke^{A_{n,i}^+}
\ttk_\lambda+\ttf^{A^-}g\ttk_\la$.
Since
$\ttf^{A^-}g\in\spann_\mbq\{\ttf^{A^-}\tte^B\mid
B=B_i^+\in\afThnp,\,\sg(B)+\sg(A^-)<\sg(A)\}\han\Pi_{i,A},$
it is enough to prove that
\begin{equation}
\bffkf^{A^-}\bffke^{A_{i,i}^+}\bffke^{A_{1,i}^+}
\cdots\bffke^{A_{i-1,i}^+}\bffke^{A_{i+1,i}^+}\ldots\bffke^{A_{n,i}^+}
\ttk_\lambda\in\Pi_{i,A}\ttk_\la.
\end{equation}
By \ref{lem3} we have
\begin{equation}\label{eq2 for cor11}
\bffkf^{A^-}\bffke^{A_{i,i}^+}\bffke^{A_{1,i}^+}
\cdots\bffke^{A_{i-1,i}^+}\bffke^{A_{i+1,i}^+}\ldots\bffke^{A_{n,i}^+}
\ttk_\lambda=\bffkf^{A^-}\bffke^{A_{i,i}^+}\bffkk_{\lambda'}
\bffke^{A_{1,i}^+}\cdots\bffke^{A_{i-1,i}^+}
\bffke^{A_{i+1,i}^+}\ldots\bffke^{A_{n,i}^+}
\end{equation}
where $\la'=\la-\co(A^+-A_{i,i}^+)+\ro(A^+-A_{i,i}^+)$. Since $A^+=A_i^+$
we have $(\ro(A^+-A_{i,i}^+))_i=0$. This together with the condition $\la_i<\sg_i(A)$ implies that $\lambda_i'=\la_i-(\co(A^+-A_{i,i}^+))_i=\lambda_i-\sigma_i(A^+)+\sigma_i(A_{i,i}^+)
<\sigma_i(A^-)+\sigma_i(A_{i,i}^+)$. Hence by \ref{lem1} and \eqref{commute} we have
$$\ttf^{A^-}\tte^{A_{i,i}^+}\ttk_{\la'}=\sum_{B\in\afThnpm_i\atop\sg(B)<
\sg(A_{i,i}^+)+\sg(A^-)}d_B\ttf^{B^-}\tte^{B^+}\ttk_{\la'}$$
where $d_B\in\mbq$. This together with
\eqref{eq2 for cor11} implies that
\begin{equation*}
\begin{split}
&\qquad \bffkf^{A^-}\bffke^{A_{i,i}^+}\bffke^{A_{1,i}^+}
\cdots\bffke^{A_{i-1,i}^+}\bffke^{A_{i+1,i}^+}\ldots\bffke^{A_{n,i}^+}
\ttk_\lambda\\
&=\sum_{B\in\afThnpm_i\atop\sg(B)<
\sg(A_{i,i}^+)+\sg(A^-)}d_B\ttf^{B^-}\tte^{B^+}\ttk_{\la'}
\bffke^{A_{1,i}^+}
\cdots\bffke^{A_{i-1,i}^+}\bffke^{A_{i+1,i}^+}\ldots
\bffke^{A_{n,i}^+}\\
&=\sum_{B\in\afThnpm_i\atop\sg(B)<
\sg(A_{i,i}^+)+\sg(A^-)}d_B\ttf^{B^-}(\tte^{B^+}
\bffke^{A_{1,i}^+}
\cdots\bffke^{A_{i-1,i}^+}\bffke^{A_{i+1,i}^+}
\ldots\bffke^{A_{n,i}^+})\ttk_{\la}\\
\end{split}
\end{equation*}
By \eqref{eq2} we have
$$
\tte^{B^+}
\bffke^{A_{1,i}^+}
\cdots\bffke^{A_{i-1,i}^+}\bffke^{A_{i+1,i}^+}
\ldots\bffke^{A_{n,i}^+}
 =\sum_{C=C_i^+\in\afThnp\atop\sg(C)\leq\sg(B^+)+\sg(A^+-A_{i,i}^+)}
 k_{B^+,C}\tte^C\,(k_{B^+,C}\in\mbq).$$
Thus we have
$$\bffkf^{A^-}\bffke^{A_{i,i}^+}\bffke^{A_{1,i}^+}
\cdots\bffke^{A_{i-1,i}^+}\bffke^{A_{i+1,i}^+}\ldots\bffke^{A_{n,i}^+}
\ttk_\lambda=\sum_{B\in\afThnpm_i,\,C=C_i^+\in\afThnp\atop\sg(C)+\sg(B^-)
\leq\sg(B)+\sg(A^+-A_{i,i}^+)<
\sg(A)}d_Bk_{B^+,C}\ttf^{B^-}\tte^C\ttk_{\la}$$
as required.
\end{proof}

\begin{Prop}\label{prop1}
The set $$\mathfrak{M}:=\{\bffke^{A^+}\bffkk_{\la}\bffkf^{A^-}|A\in\afThnpm,\,
\la_i\geq\sg_i(A)\,\forall i\}$$ is a spanning set for $\bfTr$.
\end{Prop}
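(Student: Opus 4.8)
The plan is to carry the PBW basis \eqref{basis2} of $\sU(\afgl)$ across the surjection $\xi_r$ and then straighten the resulting monomials into the normal form recorded by $\mathfrak M$, by induction on $\sg(A)$.

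First I would show that the \emph{a priori} larger set $\{\bffke^{A^+}\ttk_\mu\bffkf^{A^-}\mid A\in\afThnpm,\ \mu\in\afLanr\}$ already spans $\bfTr$. Since $\xi_r$ is surjective by \ref{surj xir}, the images under $\xi_r$ of the basis \eqref{basis2} span $\bfTr$. By \ref{surj xir} the central factor $\binom{H}{\la}$ is sent to $\ttk_\la$ when $\sg(\la)=r$ and to $0$ otherwise, so only $\mu\in\afLanr$ survive, while $\prod_{(i,j)\in\sL^+}\afE_{i,j}^{a_{i,j}}$ is sent to the ordered product $\prod_{(i,j)\in\sL^+}\tte_{i,j}^{a_{i,j}}$. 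This product and $\bffke^{A^+}$ are two arrangements of the same multiset of raising generators, hence agree modulo $\sN^+_{\sg(A^+)}$ by \ref{MM'}(2); an induction on degree then gives $\spann\{\prod_{\sL^+}\tte_{i,j}^{a_{i,j}}\}=\spann\{\bffke^{A^+}\}$, and dually for $\sL^-$ and $\bffkf^{A^-}$. As these reorderings take place entirely to the left (resp.\ right) of the central $\ttk_\mu$, the displayed set spans $\bfTr$.

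Next I would induct on $\sg(A)$ to pull each $\bffke^{A^+}\ttk_\mu\bffkf^{A^-}$ into $\spann(\mathfrak M)$. If $\mu_i\ge\sg_i(A)$ for every $i$ it already lies in $\mathfrak M$; otherwise fix $i$ with $\mu_i<\sg_i(A)$. I would exploit the splitting $\sg_i(A)=\sg(A_i^+)+\sg(A_i^-)$ of the constraint into the column-$i$ part of $A^+$ and the row-$i$ part of $A^-$, and within each the further split into its line-$i$ piece (the entries $\afE_{i,i\pm mn}$, which are governed by \ref{T(n,r)}(R10) through \ref{lem1}) and its non-line, real-root pieces. Using \ref{MM'} to reorder and \ref{lem3}, \ref{lem4} to slide $\ttk$ across the non-line factors, I would try to isolate a purely line-$i$ sandwich $\bffke^{L^+}\ttk_{\mu'}\bffkf^{L^-}$; by \ref{lem3} each passage of $\ttk$ past a non-line factor shifts the weight by its mass, turning the hypothesis $\mu_i<\sg_i(A)$ into $\mu'_i<\sg(L^+)+\sg(L^-)$, which is exactly the input of \ref{lem1}. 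That lemma then rewrites the sandwich as a combination of line-$i$ configurations of strictly smaller $\sg$, while every commutation correction lies in an $\sN$ of strictly smaller degree; the induction hypothesis absorbs all of these.

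The crux is precisely this isolation. The difficulty is that $\sg_i(A)$ draws mass from both the column-$i$ raising part $A_i^+$, which sits to the left of $\ttk$, and the row-$i$ lowering part $A_i^-$, which sits to the right, and only their line-$i$ portions commute with $\ttk$ by \eqref{commute}. A direct computation with \ref{lem3} shows that absorbing the non-line column mass into the weight requires moving $\ttk$ \emph{left}, whereas absorbing the non-line row mass requires moving it \emph{right}, so one central $\ttk$ cannot absorb both simultaneously. The reduction must therefore be organized into two passes — the $\Ga_i$-form reduction (the first identity of \ref{cor1}) for the column contributions and the $\Ga_i'$-form reduction \eqref{cor12} for the row contributions — or as a nested induction peeling off one non-line factor at a time; in either case one must check that the weight bookkeeping converts $\mu_i<\sg_i(A)$ into the strict inequality that fires the line-$i$ truncation \ref{T(n,r)}(R10), the non-line contributions having been absorbed into the weight via \ref{lem3} and \ref{lem4} (ultimately relation \ref{T(n,r)}(R2)), and that each correction strictly lowers $\sg$. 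Getting this two-sided bookkeeping to close is where essentially all the work lies.
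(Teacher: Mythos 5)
Your overall architecture --- spanning via the surjection $\xi_r$ and the PBW basis \eqref{basis2}, then induction on $\sg(A)$ with the line-$i$ truncation \ref{T(n,r)}(R10) (through \ref{lem1}) as the engine --- is exactly the paper's, and your first paragraph is a correct (indeed slightly more careful) version of the paper's first step. But there is a genuine gap precisely where you say ``essentially all the work lies'': the reduction when $\mu_i<\sg_i(A)$ is never carried out, and the organization you propose for it would not close as described. A literal two-pass scheme --- the $\Ga_i$-form of \ref{cor1} for the column contributions and the $\Ga_i'$-form \eqref{cor12} for the row contributions --- cannot work, because the hypothesis is a \emph{single combined} inequality $\mu_i<\sg_i(A^+)+\sg_i(A^-)$: if, say, each side carries $\sg_i$-mass $2$ and $\mu_i=3$, then neither the column part nor the row part alone violates its own bound, so neither pass ever fires (R10). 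For the same reason one cannot aim at a purely line-$i$ sandwich as direct input to \ref{lem1}; as you yourself observe, a single central $\ttk$ cannot absorb the non-line mass from both sides, so that target is unreachable.

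The missing idea is that \eqref{cor12} is not a ``row pass'': $\Ga_i'$ is a \emph{mixed} class (full row-$i$ lowering part, but only line-$i$ raising part), it consumes the combined inequality, and it is applied exactly once. Concretely, the paper writes $\ttm_{B,\la}=\bffke^{B_n^+}\cdots\bffke^{B_1^+}\ttk_\la\bffkf^{B_1^-}\cdots\bffkf^{B_n^-}$, isolates $\bffke^{B_i^+}\ttk_\la\bffkf^{B_i^-}$ modulo $\sN_{\sg(B)}$ by \ref{MM'}, moves $\ttk_\la$ \emph{left} past all of $\bffke^{B_i^+}$ by \ref{lem3} (so $\la'_i=\la_i-\sg_i(B_i^+)+\sg_i(B_{i,i}^+)$, absorbing the non-line column mass), swaps $\bffke^{B_i^+}$ and $\bffkf^{B_i^-}$ modulo $\sN_{\sg(B)}$, splits off the line part $B_{i,i}^+$, and then applies \eqref{cor12} to the single block $\ttk_{\la'}\bffkf^{B_i^-}\bffke^{B_{i,i}^+}$: the combined hypothesis $\la_i<\sg_i(B_i^-)+\sg_i(B_i^+)$ transforms exactly into $\la'_i<\sg_i(B_i^-)+\sg_i(B_{i,i}^+)$, which is what $\Ga_i'$ requires. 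The remaining (row-side) non-line absorption is hidden inside the proof of \ref{cor1} itself, where it is unproblematic because there $\ttk$ sits at the \emph{end} of the word rather than in the middle, so it only ever has to cross one side's non-line factors before \ref{lem1} takes over; the $\Ga_i'$ statement is then obtained from the $\Ga_i$ one for free via the anti-automorphism $\tau$ of \ref{antilem0}. Without this asymmetric, nested use of \ref{cor1}, your induction does not close: as written, the proposal is an accurate roadmap of the difficulty rather than a proof.
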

\begin{proof}
For $A\in\afThnpm$ and $\lambda\in\Lambda_\vtg(n,r)$, let
\begin{equation*}
\ttm_{A,\lambda}=\bffke^{A^+}\bffkk_\lambda\bffkf^{A^-}.
\end{equation*}
By \eqref{basis2} and \ref{surj xir}, we conclude that $\bfTr$ is spanned by the elements $\ttm_{A,\lambda}$ with $\lambda\in\afLa(n,r)$, $A\in\afThnpm$.
Therefore, to prove this proposition, we must show that if $\lambda\in\afLanr$, $B\in\afThnpm$ and $\lambda_i<\sigma_i(B)$ for some $1\leq i\leq n$, then
$\ttm_{B,\lambda}$ lies in the span of $\mathfrak{M}$.

We proceed by induction on $\sigma(B)$.
If $\sigma(B)=1$, then by \ref{T(n,r)}(R10) and \ref{lem4} we have
$\ttm_{B,\la}=0$. Assume now that $\sigma(B)>1$ and $\lambda_i<\sigma_i(B)$.
By \eqref{main2} we may write
$$\ttm_{B,\lambda}=\bffke^{B_n^+}\cdots\bffke^{B_1^+}\bffkk_\lambda\bffkf^{B_1^-}\cdots\bffkf^{B_n^-},$$
where $B_i^+$ and $B_i^-$ are as given in \eqref{Aj+}.
Let ${\ttm}_{B,\lambda}^{(1)}=
x_B\bffke^{B_i^+}
\bffkk_\lambda\bffkf^{B_i^-}y_B.$
where $x_B=\bffke^{B_n^+}\cdots\bffke^{B_{i+1}^+}\bffke^{B_{i-1}^+}
\cdots\bffke^{B_1^+}$ and $y_B=\bffkf^{B_1^-}
\cdots\bffkf^{B_{i-1}^-}\bffkf^{B_{i+1}^-}\cdots\bffkf^{B_n^-}$.
By \ref{MM'} we have
\begin{equation}\label{eq1 for prop1}
\ttm_{B,\la}-\ttm_{B,\la}^{(1)} \in\sN_{\sg(B)}.
\end{equation}
Furthermore by  \ref{lem3} we have
$${\ttm}_{B,\lambda}^{(1)}=
x_B\bffkk_{\lambda'}\bffke^{B_i^+}\bffkf^{B_i^-}
y_B,$$
where $\lambda'=\la-\co(B_i^+)+\ro(B_i^+).$
Let ${\ttm}_{B,\lambda}^{(2)}=
x_B\bffkk_{\lambda'}\bffkf^{B_i^-}\bffke^{B_i^+}
y_B.$
Then by \ref{MM'} we have
\begin{equation}\label{eq2 for prop1}
\ttm_{B,\la}^{(1)}-\ttm_{B,\la}^{(2)} \in\sN_{\sg(B)}.
\end{equation}
Let $${\ttm}_{B,\lambda}^{(3)}=
x_B(\bffkk_{\lambda'}\bffkf^{B_i^-}\bffke^{B_{i,i}^+})\bffke^{B_{1,i}^+}
\cdots\bffke^{B_{i-1,i}^+}\bffke^{B_{i+1,i}^+}
\cdots\bffke^{B_{n,i}^+}
y_B.$$
By \eqref{main2} and \ref{MM'} we have
$$\tte^{B_i^+}-\bffke^{B_{i,i}^+}\bffke^{B_{1,i}^+}
\cdots\bffke^{B_{i-1,i}^+}\bffke^{B_{i+1,i}^+}
\cdots\bffke^{B_{n,i}^+}\in\sN_{\sg(B_i^+)}^+.$$
where
$B_{s,i}^+$ is as given in \eqref{Aij+}.
It follows that
\begin{equation}\label{eq3 for prop1}
\ttm_{B,\la}^{(2)}-\ttm_{B,\la}^{(3)} \in\sN_{\sg(B)}.
\end{equation}
Note that $B_{i,i}^+\in\afThnp_i.$
Since $\lambda_i<\sigma_i(B)=\sigma_i(B_i^-)+\sigma_i(B_i^+)$,  we have
$\lambda'_i=\la_i-\sg_i(B_i^+)+\sg_i(B_{i,i}^+)
<\sigma_i(B_i^-)+\sigma_i(B_{i,i}^+)$.
Hence by \eqref{cor12} and \ref{MM'} we have
$$\bffkk_{\lambda'}\bffkf^{B_i^-}\bffke^{B_{i,i}^+}\in\sN_{\sg(B_i^-+B_{i,i}^+)}.
$$
This together with \ref{MM'} implies that
\begin{equation}\label{eq4 for prop1}
{\ttm}_{B,\lambda}^{(3)}\in\sN_{\sg(B)}.
\end{equation}
By the induction hypothesis we have $\sN_{\sg(B)}\han\spann\frak{M}$.
Thus by \eqref{eq1 for prop1}--\eqref{eq4 for prop1} we have
$\ttm_{B,\la}\in\spann\frak{M}$. The assertion follows.
\end{proof}

\section{The isomorphism between $\bfTr$ and $\afSrmbq$}

For $m\in\mbz$ let
$$
\tf_i(m)=
\begin{cases}
\afE_{i,i+nm}[0,r]&\text{if $m\not=0$;}\\
0[\afbse_i,r]&\text{otherwise}.
\end{cases}$$
Furthermore, by using \eqref{recursive forumula},
we define similarly the elements $\tf_i(m_1,\cdots,m_t)\in\afSrmbq$  for $m_1,\cdots,m_t\in\mbz$ with $t>1$.

\begin{Lem}\label{key1}
 For $i\in I$ and $m_1,\ldots,m_t\in\mbz\backslash\{0\}$, we have
$$\tf_i({m_1},{m_2},\ldots,{m_{t}})=a_{m_1,\ldots,m_t}\cdot
\bigg(\sum_{j=1}^{t}\afE_{i,i+m_{j}n}\bigg)[0,r].$$\ where $a_{m_1,\ldots,m_t}= \prod_{2\leq k\leq t}(\sum_{1\leq s\leq k}\dt_{m_s,m_k})$.
\end{Lem}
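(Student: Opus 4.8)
The plan is to run an induction on $t$ entirely inside $\afSrmbq$, feeding the recursion \eqref{recursive forumula} into the multiplication formulas \ref{Multiplication Formulas at v=1}(3) and (1). Write $\bfm=(m_1,\ldots,m_t)$ and $c_l=\#\{j\mid m_j=l\}$ for $l\in\mbz\backslash\{0\}$. I first record two reductions. The coefficient telescopes to $a_{\bfm}=\prod_{l}c_l!$, so that $a_{\bfm}=c_{m_t}\,a_{m_1,\ldots,m_{t-1}}$, and both $a_\bfm$ and the target matrix $A_\bfm:=\sum_{j=1}^t\afE_{i,i+m_jn}\in\afThnpm$ depend only on the multiset $\{m_1,\ldots,m_t\}$. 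Second, since $\tf_i(a)=\eta_r(\afE_{i,i+an})$ for all $a\in\mbz$ and the $\afE_{i,i+an}$ commute in $\sU(\afgl)$ (by the bracket relation recalled before \eqref{eq2}, both Kronecker deltas being $1$), the elements $\tf_i(a)$ pairwise commute in $\afSrmbq$; in particular $\tf_i(m_1,\ldots,m_{t-1})$ commutes with $\tf_i(m_t)$.

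For the inductive step I apply $\tf_i(\bfm)=\tf_i(m_1,\ldots,m_{t-1})\tf_i(m_t)-\sum_{j<t}\tf_i(m_1,\ldots,\widehat{m_j},\ldots,m_{t-1},m_j+m_t)$. By the hypothesis $\tf_i(m_1,\ldots,m_{t-1})=a_{\bfm_{<t}}A'[0,r]$ with $A'=\sum_{j<t}\afE_{i,i+m_jn}$, and using commutativity I rewrite the first product as $\afE_{i,i+m_tn}[0,r]\cdot A'[0,r]$, now simple on the left and thus expandable by \ref{Multiplication Formulas at v=1}(3) with $h=i$, $m=m_t$, $\bfj=0$. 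I expect the three sums there to play three distinct roles. The creation term (third sum) is $(c'_{m_t}+1)(A'+\afE_{i,i+m_tn})[0,r]=c_{m_t}A_\bfm[0,r]$, which after multiplying by $a_{\bfm_{<t}}$ and using $a_\bfm=c_{m_t}a_{\bfm_{<t}}$ yields exactly the asserted $a_\bfm A_\bfm[0,r]$. The hopping sum (first sum), which moves one box from column $i+ln$ to $i+(l+m_t)n$, should cancel termwise against the subtracted contributions with $m_j=l\neq -m_t$ (all-nonzero $(t-1)$-tuples, governed by the hypothesis at length $t-1$), the matching reducing to the routine multiplicity identity $a_{\bfm_{<t}}(c'_{l+m_t}+1)=c'_l\,a_{\bfm^{(l\to l+m_t)}}$. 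The single weight-bump term (second sum) $(A'-\afE_{i,i-m_tn})[\afbse_i,r]$, present precisely when $c'_{-m_t}\geq 1$, should absorb the remaining subtracted contributions, namely those with $m_j=-m_t$, i.e.\ $m_j+m_t=0$.

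Those last contributions are tuples ending in a zero, so the induction must be coupled with a companion statement, which I would prove by the same method: for an all-nonzero $\bfn'=(n_1,\ldots,n_s)$,
$$\tf_i(n_1,\ldots,n_s,0)=a_{\bfn'}\,A_{\bfn'}[\afbse_i,r].$$
Here the recursion (with last entry $0$) has subtracted terms $\tf_i(n_1,\ldots,\widehat{n_j},\ldots,n_s,n_j)$, each a reordering of $\bfn'$ and hence equal to $a_{\bfn'}A_{\bfn'}[0,r]$ by the main statement at length $s$, while $\tf_i(\bfn')\tf_i(0)=a_{\bfn'}\,0[\afbse_i,r]\cdot A_{\bfn'}[0,r]$ is evaluated by \ref{Multiplication Formulas at v=1}(1) as $a_{\bfn'}\bigl(A_{\bfn'}[\afbse_i,r]+s\,A_{\bfn'}[0,r]\bigr)$, the mass $s$ being the total of row $i$ of $A_{\bfn'}$; the $s\,A_{\bfn'}[0,r]$ cancels the $s$ subtracted copies. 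Applying this with $\bfn'=\bfm_{<t}$ minus one entry $-m_t$ (length $t-2$), and using $a_{\bfn'}=a_{\bfm_{<t}}/c'_{-m_t}$ together with the $c'_{-m_t}$ equal copies, shows the weight-bump term is cancelled exactly, since $A'-\afE_{i,i-m_tn}=A_{\bfn'}$. Thus the main statement at length $t$ follows from the main and companion statements at smaller length, with base cases $\tf_i(m_1)=\afE_{i,i+m_1n}[0,r]$ and $\tf_i(0)=0[\afbse_i,r]$.

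The main obstacle is exactly the resonant indices with $m_j+m_t=0$: they destroy the naive expectation that $\tf_i(\bfm)$ stays a single $[0,r]$-basis vector, because \ref{Multiplication Formulas at v=1}(3) then emits a genuinely different element carrying the weight $\afbse_i$. Closing the induction therefore hinges on (i) isolating and formulating the companion $[\afbse_i,r]$-identity, and (ii) verifying that the second-sum weight-bump term cancels the zero-ending subtracted terms with the correct scalar; everything else is the bookkeeping of the multiplicity identities behind $a_\bfm=\prod_l c_l!$ that align the hopping sum with the non-resonant terms. (When $t>r$ both sides vanish, as $\sg(A_\bfm)=t$, so one may assume $t\le r$.)
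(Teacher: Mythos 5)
Your proposal is correct and is essentially the paper's own argument: the paper likewise inducts on $t$, feeds the recursion into the product $\tf_i(m_t)\tf_i(m_1,\ldots,m_{t-1})$, expands it with Theorem \ref{Multiplication Formulas at v=1}(3) at $\bfj=0$, reads off the answer from the creation term, and cancels the hopping terms against the subtracted tuples with $m_j+m_t\neq 0$ and the $[\afbse_i,r]$-term against those with $m_j+m_t=0$, the latter handled exactly as in your companion lemma by expanding $\tf_i(\ldots,0)$ once more through the recursion and applying Theorem \ref{Multiplication Formulas at v=1}(1). The only differences are organizational: you extract the zero-ending case as a separate statement and phrase the coefficient bookkeeping through $a_{m_1,\ldots,m_t}=\prod_l c_l!$, whereas the paper inlines both, encoding the same cancellations in its coefficients $h_j$ and $s_j$.
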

\begin{proof}
We use induction on $t$.
The case $t=1$ is trivial. The result follows from \ref{Multiplication Formulas at v=1} when $t=2$.
Assume now $t>2$.
By the inductive hypothesis
we have $\tf_i({m_1},{m_2},\ldots,{m_{t-1}})=a_{m_1,\ldots,m_{t-1}}\cdot A[0,r]$, where
$A=\sum_{s=1}^{t-1}\afE_{i,i+m_{s}n}$. It follows from \ref{Multiplication Formulas at v=1}(3) that
\begin{align*}
&\quad \tf_i({m_1},{m_2},\ldots,{m_{t}})\\
&=\tf_i(m_{t})\tf_i({m_1},{m_2},\ldots,{m_{t-1}})-\sum_{j=1}^{t-1}
\tf_i(m_1,\ldots,\widehat{m_j},\ldots,m_{t-1},m_j+m_{t})
\\
&=a_{m_1,\ldots,m_{t-1}}\sum_{j=1}^{t-1}
(h_j(A+\afE_{i,i+n(m_j+m_{t})}-\afE_{i,i+nm_j})[0,r]+
s_j(A-\afE_{i,i-nm_{t}})[\afbse_i,r])\\&
\quad
+a_{m_1,\ldots,m_t}\big(\sum_{j=1}^{t}\afE_{i,i+m_{j}n}\big)[0,r]-\sum_{j=1}^{t-1}
\tf_i(m_1,\ldots,\widehat{m_j},\ldots,m_{t-1},m_j+m_{t}),
\end{align*}
where $h_j=(1+\sum_{1\leq s\leq t-1,\,s\not=j}\dt_{m_j+m_t,m_s})
(1-\delta_{0,m_j+m_{t}})(\sum_{1\leq s\leq t-1}\dt_{m_j,m_s})^{-1}$
and $s_j=\delta_{0,m_j+m_{t}}(\sum_{1\leq s\leq t-1}\dt_{m_j,m_s})^{-1}$.
Thus it is enough to prove
\begin{equation*}\label{eq0 for key1}
\begin{split}
&\qquad\tf_i(m_1,\ldots,\widehat{m_j},\ldots,m_{t-1},m_j+m_{t})\\
&=
a_{m_1,\ldots,m_{t-1}}
(h_j(A+\afE_{i,i+n(m_j+m_{t})}-\afE_{i,i+nm_j})[0,r]+
s_j(A-\afE_{i,i-nm_{t}})[\afbse_i,r])
\end{split}
\end{equation*}
for $1\leq j\leq t-1$.

If $m_j+m_{t}=0$ for some $1\leq j\leq t-1$, then by definition
we have
\begin{equation}\label{eq1 for key1}
\begin{split}
&\quad\quad\tf_i(m_1,\ldots,\widehat{m_j},\ldots,m_{t-1},m_j+m_{t})\\
&=\tf_i({m_1},\ldots,\widehat{m_j},\ldots,m_{t-1})\tf_i(0)
-\sum\limits_{1\leq k\leq t-1\atop k\neq j}
\tf_i(m_1,\ldots,\widehat{m_j},\ldots,
\widehat{m_k},\ldots,m_{t-1},m_k).
\end{split}
\end{equation}
Furthermore we have $h_j=0$ and $s_j=(\sum_{1\leq s\leq t-1}\dt_{m_j,m_s})^{-1}$.
Since $a_{m_1,\ldots,m_k,m_{k+1},\ldots,m_{t-1}}=a_{m_1,\ldots,m_{k+1},m_k,\ldots,m_{t-1}}$ for any $k$ we have
\begin{equation}\label{am1 cdots mt-1}
a_{m_1,\ldots,m_{t-1}}=a_{m_1,\ldots,\widehat {m_j},\ldots,m_{t-1},m_j}
=a_{m_1,\ldots,\widehat {m_j},\ldots,m_{t-1}}\sum_{1\leq s\leq t-1}\dt_{m_j,m_s}=a_{m_1,\ldots,\widehat {m_j},\ldots,m_{t-1}}s_j^{-1}.
\end{equation}
Thus by the induction hypothesis we conclude that
\begin{equation*}
\begin{split}
\tf_i(m_1,\ldots,\widehat{m_j},\ldots,
\widehat{m_k},\ldots,m_{t-1},m_k)&=\tf_i({m_1},\ldots,\widehat{m_j},
\ldots,m_{t-1})\\
&={a_{m_1,\ldots,\h{m_j},\ldots,m_{t-1}}}
(A-\afE_{i,i+nm_j})[0,r]\\
&= {a_{m_1,\ldots,m_{t-1}}}s_j
(A-\afE_{i,i+nm_j})[0,r]
\end{split}
\end{equation*}
for $1\leq k\leq t-1$ with $k\not=j$.
This together with \eqref{eq1 for key1} and \ref{Multiplication Formulas at v=1}(1) implies that
\begin{equation*}
\begin{split}
\tf_i(m_1,\ldots,\widehat{m_j},\ldots,m_{t-1},m_j+m_{t})&=
\tf_i(m_1,\ldots,\h{m_j},\ldots,m_{t-1})(\tf_i(0)-(t-2))\\
&= {a_{m_1,\ldots,m_{t-1}}}s_j
(A-\afE_{i,i+nm_j})[0,r](0[\afbse_i,r]-(t-2))\\
&= {a_{m_1,\ldots,m_{t-1}}}s_j
(A-\afE_{i+inm_j})[\afbse_i,r]
\end{split}
\end{equation*}
as desired.

Now we assume $m_j+m_{t}\neq 0$ for some $1\leq j\leq t-1$. Then we have $s_j=0$
and $h_j=(1+\sum_{1\leq s\leq t-1,\,s\not=j}\dt_{m_j+m_t,m_s})
(\sum_{1\leq s\leq t-1}\dt_{m_j,m_s})^{-1}$.
By the induction hypothesis and
\eqref{am1 cdots mt-1} we have
\begin{equation*}
\begin{split}
&\qquad\tf_i(m_1,\ldots,\widehat{m_j},\ldots,m_{t-1},m_j+m_{t})\\
&=a_{m_1,\ldots,\widehat{m_j},
\ldots,m_{t-1},m_j+m_{t}}(A+\afE_{i,i+n(m_j+m_{t})}-\afE_{i,i+nm_j})[0,r]\\
&=a_{m_1,\ldots,\widehat{m_j},
\ldots,m_{t-1}}(1+\sum_{1\leq s\leq t-1,\,s\not=j}\dt_{m_j+m_t,m_s})(A+\afE_{i,i+n(m_j+m_{t})}-\afE_{i,i+nm_j})[0,r]\\
&=a_{m_1,\ldots,m_{t-1}}h_j(A+\afE_{i,i+n(m_j+m_{t})}-\afE_{i,i+nm_j})[0,r].
\end{split}
\end{equation*}
The assertion follows.
\end{proof}

\begin{Thm}\label{main}
The map $\eta_r$ given in \eqref{surjective} induces an algebra isomorphism
$\bar\eta_r: \bfTr\ra\afSrl_\mbq$ such that $\bar\eta_r\circ\xi_r=\eta_r$, where $\xi_r$ is defined in \eqref{xir}.
In particular, $\afSrl_\mbq$ is generated by $${\bffke_i},{\bffkf}_i, \bffkk_\la,\tte_{i,i+nm}(1\leq i\leq n, m\in\mbz\backslash\{0\}, \la\in\Lambda_\vtg(n,r))$$ subject to the relations {\rm (R1)--(R10)} in $\ref{T(n,r)}$.
\end{Thm}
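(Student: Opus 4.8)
The plan is to construct $\bar\eta_r$ directly from the presentation \ref{T(n,r)} and then to match a spanning set of $\bfTr$ with the PBW basis of $\afSrmbq$ supplied by \ref{basis for ASA}. First I would define a $\mbq$-linear map on generators by $\bar\eta_r(\bffke_i)=\afE_{i,i+1}[0,r]$, $\bar\eta_r(\bffkf_i)=\afE_{i+1,i}[0,r]$, $\bar\eta_r(\bffkk_\la)=[\diag(\la)]_1$ and $\bar\eta_r(\tte_{i,i+mn})=\afE_{i,i+mn}[0,r]$, and verify that (R1)--(R10) are respected so that $\bar\eta_r$ extends to an algebra homomorphism. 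Since these images coincide with $\eta_r(E_i),\eta_r(F_i),\eta_r(\afE_{i,i+mn})$, every relation involving only $\bffke_i,\bffkf_i,\tte_{i,i+mn}$, namely (R4)--(R9), holds automatically, being the $\eta_r$-image of the corresponding relation (UR4)--(UR9)$'$ valid in $\sU(\afgl)$. Relation (R1) is the orthogonal idempotent decomposition of $\afSrmbq$, while (R2) and (R3) follow by a short direct computation from $A[\bfj,r]=\sum_\mu\mu^{\bfj}[A+\diag(\mu)]_1$ by matching the $\ro$ and $\co$ conditions on idempotents; for (R3) one also uses $\eta_r(H_j)=0[\afbse_j,r]=\sum_\la\la_j[\diag(\la)]_1$ together with (UR3).

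The only essentially new relation is (R10), and here \ref{key1} is decisive. Under $\bar\eta_r$ the element $f_i(m_1,\dots,m_t)$ is carried to $\tf_i(m_1,\dots,m_t)$, which by \ref{key1} equals $a_{m_1,\dots,m_t}\big(\sum_{j=1}^t\afE_{i,i+m_jn}\big)[0,r]$. Writing $B=\sum_{j=1}^t\afE_{i,i+m_jn}$, every column of $B$ lies in the residue class $i$, so $\co(B)_i=t$; hence $B[0,r]\cdot[\diag(\la)]_1=\sum_\mu[B+\diag(\mu)]_1\cdot[\diag(\la)]_1$ is nonzero only if $\mu=\la-\co(B)\in\afLanr$, which forces $\la_i\ge t$. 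Therefore $\bar\eta_r(f_i(m_1,\dots,m_t)\bffkk_\la)=0$ whenever $\la_i<t$, which is exactly (R10). This shows that $\bar\eta_r$ is a well-defined homomorphism; comparing images on generators gives $\bar\eta_r\circ\xi_r=\eta_r$, and since $\eta_r$ is surjective (see \eqref{surjective}), so is $\bar\eta_r$.

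For injectivity I would match \ref{prop1} with \ref{basis for ASA}. By \ref{prop1} the set $\mathfrak{M}=\{\bffke^{A^+}\bffkk_\la\bffkf^{A^-}\mid A\in\afThnpm,\ \la_i\ge\sg_i(A)\ \forall i\}$ spans $\bfTr$. For such a pair $(A,\la)$ set $\widetilde A=A+\diag(\la-\bssg(A))$; the inequalities $\la_i\ge\sg_i(A)$ guarantee that $\widetilde A\in\afThnr$, and a direct check gives $\widetilde A^{\pm}=A^{\pm}$ and $\bssg(\widetilde A)=\la$, so that $(A,\la)\mapsto\widetilde A$ is a bijection onto $\afThnr$ with inverse $\widetilde A\mapsto(\widetilde A^++\widetilde A^-,\bssg(\widetilde A))$. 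Unravelling the definitions of $\bffke^{A^+}$ and $\bffkf^{A^-}$ and applying $\bar\eta_r$, the element $\bar\eta_r(\bffke^{A^+}\bffkk_\la\bffkf^{A^-})$ is exactly the PBW basis vector of $\afSrmbq$ attached to $\widetilde A$ in \ref{basis for ASA}. Thus $\bar\eta_r$ sends $\mathfrak{M}$ bijectively onto that basis; since the image is linearly independent, any linear relation among the elements of $\mathfrak{M}$ must be trivial, whence $\mathfrak{M}$ is itself a basis of $\bfTr$ and $\bar\eta_r$ carries a basis to a basis. Consequently $\bar\eta_r$ is an isomorphism, and the final assertion follows at once.

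I expect the main obstacle to be the well-definedness of $\bar\eta_r$, concentrated entirely in relation (R10): without the closed evaluation of \ref{key1} there is no transparent reason for the vanishing $f_i(m_1,\dots,m_t)\bffkk_\la=0$ to hold in $\afSrmbq$. Once existence is secured, the isomorphism is essentially forced by the bookkeeping of \ref{prop1} and \ref{basis for ASA}, which pin down matching bases on the two sides.
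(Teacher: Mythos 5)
Your proposal is correct and takes essentially the same route as the paper's (very terse) proof: well-definedness of $\bar\eta_r$ comes from the presentation in \ref{lem0} together with \ref{key1} for relation (R10), and the isomorphism follows by matching the spanning set of \ref{prop1} with the PBW basis of \ref{basis for ASA}. The details you supply---the direct check of (R1)--(R3), the column-residue argument showing $\tf_i(m_1,\ldots,m_t)\cdot[\diag(\la)]_1=0$ when $\la_i<t$, and the explicit bijection $(A,\la)\mapsto A+\diag(\la-\bssg(A))$ between the spanning set and $\afThnr$---are exactly what the paper leaves implicit.
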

\begin{proof}
By \ref{lem0} and \ref{key1}, the map $\eta_r$ induces a surjective algebra homomorphism
\begin{equation*}
\bar\eta_r: \bfTr\ra\afSrl_\mbq
\end{equation*}
such that $\bar\eta_r\circ\xi_r=\eta_r$.
By \ref{basis for ASA} and \ref{prop1} we see that the map $\bar\eta_r$ sends
a spanning set of $\bfTr$ to a basis of $\afSrmbq$. Hence $\bar\eta_r$ is an isomorphism.
\end{proof}

\end{document}